\documentclass{amsproc}

\usepackage{amsmath, amssymb, amsthm}
\usepackage[latin1]{inputenc}
\usepackage{geometry}
\usepackage[normalem]{ulem}
\usepackage{cancel}
\usepackage[bookmarks=false,colorlinks=true,linkcolor=black,citecolor=black,filecolor=black,urlcolor=black]{hyperref}

\newtheorem{trm}{Theorem}

\newtheorem{lem}[trm]{Lemma}

\newcommand{\Z}{\ensuremath{\mathbb{Z}}}

\newcommand{\z}{\ensuremath{\mathcal{Z}}}
\newcommand{\GEN}[1]{\langle #1 \rangle}
\newcommand{\Exp}{\textnormal{Exp}}

\newcommand{\inv}{^{-1}}
\newcommand{\matriz}[1]{\begin{array} #1 \end{array}}

\date{}
\title{Group rings with Lie metabelian set of symmetric elements}

\thanks{The first author has been partially supported by FAPEMIG and CAPES (Proc nº BEX4147/13-8) of Brazil. The second and third authors has been partially supported by Ministerio de Econom\'{\i}a y Competitividad project MTM2012-35240 and Fondos FEDER and Fundaci\'{o}n S\'{e}neca of Murcia 04555/GERM/06. }

\author{Osnel Broche}
\address{Osnel Broche, Departamento de Ci\^{e}ncias Exatas, Universidade Federal de Lavras,
Caixa Postal 3037, 37200-000, Lavras, Brazil }
\email{osnel@dex.ufla.br}

\author{\'{A}ngel del R\'{\i}o}
\address{\'{A}ngel del R\'{\i}o, Departamento de Matem\'{a}ticas, Universidad de Murcia,
30100, Murcia, Spain}
\email{adelrio@um.es}

\author{Manuel Ruiz}
\address{Manuel Ruiz, Departamento de M\'{e}todos Cuantitativos e
Inform\'{a}ticos, Universidad Polit\'{e}cnica de Cartagena, C/ Real, sn   30.201 Cartagena, Spain}
\email{manuel.ruiz@upct.es}


\begin{document}
\begin{abstract}
Let $R$ be a commutative ring of characteristic zero and $G$ an arbitrary group.  In the present paper we classify  the groups $G$ for which the set of symmetric elements  with respect to the classical involution of the group ring $RG$ is Lie metabelian.
\end{abstract}

\maketitle

\section{Introduction}

If $*$ is an involution on a ring $R$ then the symmetric (respectively, anti-symmetric) elements of $R$ with respect to $*$ are the elements of 
$R^+=\{r\in R|\; r^*=r\}$ (respectively, $R^-=\{r\in R|\; r^*=-r\}$). 
It is well known that crucial information of the algebraic structure of $R$ can be determined by that of $R^+$. 
An important results of this nature is due to Amitsur who proved that for an arbitrary algebra $A$ with an involution $*$, if $A^+$ satisfies a polynomial identity then $A$ satisfies a polynomial identity \cite{A}.

Let $R$ be a commutative ring, let $G$ be a group and let $*$ be a group involution extended by linearity to the group ring $RG$.
More precisely, if $r=\sum_{g\in G} r_g g \in RG$ with $r_g\in R$ for each $g\in G$ then   $r^*=\sum_{g\in G} r_g g^*$.
During the last years many authors have paid attention to the algebraic properties of the symmetric and anti-symmetric elements of $RG$ and in particular to their Lie properties. 
The characterization of the Lie nilpotence of $RG^+$ and $RG^-$, for the case of the classical involution, was given in three different papers when $R$ is a field of characteristic different from 2. In the
first paper  \cite{GS}  it was considered the
case in which the group $G$  has no 2-elements. The case in which the group $G$ has $2$-elements was solved in
\cite{L} for $RG^+$ and in \cite{GS1} for $RG^-$. For group involutions extended by linearity to the whole group ring the Lie nilpotence of $RG^+$ was study in \cite{GPS1} when the group $G$ has no 2-elements, and completed for an arbitrary group $G$ in \cite{LSS2009JPAAA}. The case of oriented involutions was study in \cite{JHP}.

A particular case of Lie nilpotence, the commutativity, has been studied for $RG^+$ and $RG^-$. For the classical involution, this was study in \cite{Broche2006} for $RG^+$ and in \cite{BrochePolcino2007} for $RG^-$. For the case of an arbitrary group involution extended by linearity to the whole group ring, the commutativity of $RG^+$ was studied in \cite{JR} and that of $RG^-$ in \cite{JR2,BJPR}. This was extended to oriented involutions in \cite{BP2} for $RG^+$ and in \cite{BJR} for $RG^-$. Finally, for nonlinear involutions the commutativity of $RG^+$ was studied in \cite{R} and that of $RG^-$ in \cite{R2}.

The Lie solvability of $RG^+$ and $RG^-$ was studied in \cite{LSS2009Forum} for the classical involution when the group $G$ has no $2$-elements. The question of when $RG^+$ is Lie metabelian has been studied under some conditions, namely for $G$ a finite group of odd order without elements of order 3 and the classical involution in \cite{LR}; for $R$ a field of characteristic different from $2$ and $3$ and $G$ a periodic group without $2$-elements and an arbitrary group involution extended by linearity to $RG$ in \cite{CLS2013}. In all theses cases $RG^+$ is Lie metabelian if and only if $G$ is abelian. Finally, in \cite{CLS2013}, it is also shown that for $G$ a finite group of odd order and $R$ a field of characteristic different from 2, if $RG^+$ is Lie metabelian then $G$ is nilpotent.


The goal of this paper is to characterize the group rings $RG$, with $R$ a commutative ring of characteristic zero and $G$ an arbitrary group, for which the set $RG^+$ of symmetric elements with respect to the classical involution is Lie metabelian.  More precisely we prove the following result.

\begin{trm}\label{Main}
Let $R$ be a commutative ring of characteristic 0 and $G$ a group. Denote by $RG^+$ the set of symmetric elements of $RG$ for the classical involution.
Then $RG^+$ is Lie metabelian if and only if one of the following conditions holds:
\begin{enumerate}
 \item $\GEN{g\in G:g^2\ne 1}$ is abelian.
 \item $G$ contains an elementary abelian subgroup of index 2.
 \item $G$ has an abelian subgroup $B$ of index 2 and an element $x$ of order 4 such that $b^x=b^{-1}$ for every $b\in B$.
 \item The center of $G$ is $\{g\in G:g^2=1\}$ and it has index 4 in $G$.
\end{enumerate}
\end{trm}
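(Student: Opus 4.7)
The plan is to split the argument into sufficiency and necessity, with the main effort on the latter. A key preliminary is that $RG^+$ has $R$-basis
\[
T \cup \{g + g^{-1} : g \in G \setminus T\}, \quad T = \{g \in G : g^2 = 1\},
\]
and that $[RG^+, RG^+] \subseteq RG^-$, so the second commutator $[[RG^+,RG^+],[RG^+,RG^+]]$ is again symmetric. Since Lie metabelianness is a polynomial identity and the above basis is independent of the coefficient ring, one may reduce to $R = \Z$ and further, when convenient, to finitely generated subgroups of $G$.

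\medskip\noindent\emph{Sufficiency.} I would verify each case directly. In case (1), write $H = \GEN{g \in G : g^2 \neq 1}$. Every element of $G \setminus H$ is an involution inverting $H$ (if $g \notin H$ and $h \in H$, then $gh \notin H$, so $(gh)^2 = 1$ gives $h^g = h^{-1}$). A short computation then yields $[t, h + h^{-1}] = 0$ for any $t \in T$ and $h \in H$, while $[t_1, t_2] = t_1 t_2 - t_2 t_1 \in RH$; since $H$ is abelian, $[RG^+, RG^+]$ lies inside the commutative subalgebra $RH$. Cases (2) and (3) are analogous: writing $G = B \sqcup Bx$ with $B$ abelian of index $2$ and $b^x = b^{-1}$ for $b \in B$ (automatic in (2), hypothesis in (3)), the commutator of any two basis elements lands in $RB$. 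Case (4) is the most delicate: using that $G/Z(G)$ is Klein four and every square is a central involution, one checks that the commutator subring sits inside $RZ(G)$.

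\medskip\noindent\emph{Necessity.} Assume $RG^+$ is Lie metabelian, and let $H$ be as above. If $H$ is abelian we are in case (1). Otherwise, pick $g, h \in G$ with $g^2, h^2 \neq 1$ and $[g,h] \neq 1$ and expand
\[
[[g+g^{-1}, h+h^{-1}],[k+k^{-1}, l+l^{-1}]] = 0
\]
for judicious choices of $k, l$; equating coefficients of distinct group elements forces sharp restrictions on the structure of $\GEN{g,h}$ and on how the rest of $G$ acts. The goal is to deduce either an abelian subgroup $B$ of index $2$ inverted by some $x \in G \setminus B$ (case (2) when $x$ can be taken of order $2$ and $B$ of exponent $2$, case (3) when $x$ must have order $4$), or, when no such $B$ exists, to force the precise structure of case (4). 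The Lie-solvability classification of \cite{LSS2009Forum} and the partial Lie-metabelian results of \cite{LR, CLS2013} handle the torsion-restricted subcases and serve as cross-checks.

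\medskip\noindent\emph{Main obstacle.} The hardest part is the necessity direction in the presence of many involutions. Unlike the settings of \cite{LR, CLS2013}, each element $t \in T$ is itself a symmetric basis vector, so the metabelian identity must be tested on mixed expressions combining bare $t$'s with symmetrized sums $g + g^{-1}$. Teasing out the support relations that emerge, and then proving that cases (2), (3), (4) exhaust all possibilities once case (1) fails, is where the combinatorial bulk of the proof will lie.
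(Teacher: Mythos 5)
Your sufficiency argument contains two concrete false claims. In case (2) you assert that $b^x=b^{-1}$ for $b\in B$ is ``automatic'' and that the commutator of any two basis elements lands in $RB$; both fail already for $G=D_8=\GEN{r,s\mid r^4=s^2=1,\ r^s=r^{-1}}$ with $B=\{1,r^2,s,r^2s\}$: here $s^r=r^2s\ne s$, and $[rs,s]=r-r^3\notin RB$ even though $rs$ and $s$ are both basis elements of $RG^+$. The commutator set is nevertheless commutative, but for a different reason: $[RG^+,RG^+]\subseteq R\breve{G}$ where $\breve{G}=\{g-g^{-1}:g\in G\}$, and $\breve{G}$ is commutative precisely in cases (1) and (2) by the classification of \cite{BrochePolcino2007} (Theorem~\ref{CasiAntisimetricos}); this is how the paper disposes of those two cases simultaneously. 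Similarly, in case (4) the commutator subring does not sit inside $R\z(G)$: one has $[x+x^{-1},y+y^{-1}]=(1-t)(x+x^{-1})(y+y^{-1})$ with $t=(x,y)$, which is supported on the nontrivial coset $xy\z(G)$ whenever it is nonzero (and it is nonzero for non-Hamiltonian groups in case (4)). The paper instead establishes the identity $(1-t)(x+x^{-1})(y+y^{-1})(z+z^{-1})=0$ for representatives of the three nontrivial cyclic subgroups of $G/\z(G)$ and deduces commutativity of $[RG^+,RG^+]$ by a coset-by-coset analysis; your argument needs some replacement for that step.

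For the necessity direction your text is a statement of intent rather than a proof: ``expand for judicious choices of $k,l$ and equate coefficients'' is exactly where all the work lies, and nothing in the proposal indicates how the case analysis is organized or why it terminates in precisely conditions (2), (3), (4). The paper's route passes through a chain of structural lemmas that your sketch does not identify: $A=\GEN{g\in G:g^2=1}$ is abelian and then central; any two elements of order different from $4$ commute, so $B=\GEN{g\in G:\circ(g)\ne 4}$ is abelian; then a dichotomy on $\Exp(G)$, giving $[G:B]=2$ with inversion when $\Exp(G)\ne 4$, and, when $\Exp(G)=4$, the equality $\z(G)=A$ together with an analysis of the auxiliary subgroup $C=\GEN{xy:x^2\ne 1\ne y^2,\ (x,y)=1}$. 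Without intermediate targets of this kind the coefficient-comparison strategy has no direction, and each of these lemmas costs a long, delicate support analysis of specific double commutators. Finally, leaning on \cite{LSS2009Forum,LR,CLS2013} as a backbone cannot work: those results assume no $2$-elements (or odd order), whereas the entire content of this theorem is concentrated in the $2$-elements, as your own ``main obstacle'' paragraph acknowledges.
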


\section{Preliminaries and notation}
In this section we introduce the basic notation and definitions.
The centre of $G$ is denoted $\z(G)$ and its exponent is denoted by $\Exp(G)$. If $g$ is a group element of finite order we will denote by $\circ (g)$ its order. If $g,h\in G$ then $g^h=h\inv g h$ and $(g,h)=g^{-1}h^{-1}gh$.

For elements $a$ and $b$ in an arbitrary ring we use the standard notation for the additive commutator: $[a,b]=ab-ba$, also known as Lie bracket.
Recall that a ring $R$ is called Lie metabelian if $[[a,b],[c,d]]=0$ for all $a,b,c,d\in R$.
More generally we say that a subset $X$ of $R$ is Lie metabelian if the same identity holds for all the elements of $X$.
We also say that $X$ is commutative if the elements of $X$ commute.

More generally,   if $X$ and $Y$ are subsets of a ring then $[X,Y]$ denotes the additive subgroup generated by the Lie brackets $[x,y]$ with $x\in X$ and $y\in Y$. Observe that $X$ is Lie metabelian if and only if $[X,X]$ is commutative.

The following subsets of $RG$ play an important role:
\begin{eqnarray*}
X^+&=&\{g+g^{-1}:g \in G, g^2\neq 1\}\cup\{g:g\in G, g^2=1\} \\
\breve{G}&=&\{g-g^{-1}:g\in G\}
\end{eqnarray*}
Note that $RG^+$ is generated as an $R$-module by $X^+$ and
therefore $RG^+$ is Lie metabelian if and only if so is $X^+$. In particular,  if $RG^+$ is commutative then obviously $RG^+$ is Lie metabelian. The nonabelian groups $G$ satisfying that $RG^+$ is commutative have been classified in \cite{Broche2006}. These groups are precisely the Hamiltonian 2-groups, and are included in (2) and (3) of Theorem~\ref{Main}.
Therefore in the rest of the paper we will assume that $RG^+$ is not commutative.

Also, 	\begin{equation}\label{RGRGX}
	[RG^+,RG^+]\subseteq R\breve{G}
	\end{equation}
where $R\breve{G}$ denotes the $R$-submodule of $RG$ generated by $\breve{G}$. In fact, to see this it is enough to consider  $g,h,x,y\in G$ with $x^2=y^2=1$ and write the Lie brackets of generators of $RG^+$ in the following form:
\begin{eqnarray*}
 {}[g+g\inv,h+h\inv]&=&gh-(gh)\inv+gh\inv-(gh\inv)\inv+g\inv h - (g\inv h)\inv +  (hg)\inv - hg, \\
 {}[g+g\inv,x] &=& gx-(gx)\inv+g\inv x - (g\inv x)\inv, \\
 {}[x,y]&=&xy-(xy)\inv
\end{eqnarray*}

We will need the following result.

\begin{trm}\label{CasiAntisimetricos}\cite{BrochePolcino2007}
Let $R$ be a commutative ring with unity with characteristic 0 and let $G$ be any group.
Then $\breve{G}$ is commutative if and only if one of the following conditions holds:
\begin{enumerate}
\item $K=\GEN{g\in G: g^2\ne 1}$ is abelian (and thus  $G=K\rtimes \GEN{x}$  with $x^2=1$ and $k^x=k\inv$ for all $k\in K$).
\item $G$ contains an elementary abelian subgroup of index $2$.
\end{enumerate}
\end{trm}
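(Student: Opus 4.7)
The plan is a direct calculation. For any $g,h\in G$, expand
\[
[g-g^{-1},\,h-h^{-1}] = (gh+g^{-1}h^{-1}+hg^{-1}+h^{-1}g) - (gh^{-1}+g^{-1}h+hg+h^{-1}g^{-1}).
\]
Any element with $g^2=1$ contributes $0$ to $\breve{G}$. Under (1), with $K=\langle g:g^2\ne 1\rangle$ abelian, substituting $gh=hg$ for $g,h\in K$ makes the two parenthesized sums coincide. Under (2), with $A$ elementary abelian of index $2$, the subgroup $A$ contributes $0$; for $g,h\in G\setminus A$, both $gh$ and $gh^{-1}$ lie in $A$ and are thus involutions, giving $gh=h^{-1}g^{-1}$ and $gh^{-1}=hg^{-1}$, under which the two sums match termwise.

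\textbf{Necessity: local identities.} Conversely, assume $\breve{G}$ is commutative. Since $R$ has characteristic $0$ and $G$ is $R$-linearly independent in $RG$, vanishing of the bracket forces the signed multiset of group elements above to cancel. Fix $g,h\in G$ with $g^2\ne 1\ne h^2$ and $gh\ne hg$. A case analysis of which positive-sign term can equal which negative-sign term --- discarding the identifications that would force $g^2=1$, $h^2=1$, or $gh=hg$ --- shows that $(gh)^2=(gh^{-1})^2=1$. Rearranging these two relations yields the conjugation identities $(g^2)^h=g^{-2}$ and, symmetrically, $(h^2)^g=h^{-2}$.

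\textbf{Global structure and main obstacle.} Set $K=\langle g\in G:g^2\ne 1\rangle$. Every element of $G\setminus K$ is an involution. For $k\in K$ of order $>2$ and any $x\in G\setminus K$, the product $kx$ lies in $G\setminus K$ (else $x\in K$), so $(kx)^2=1$ gives $k^x=k^{-1}$. If there were $y\in G\setminus K$ with $xy\notin K$, the same reasoning would give $k^{xy}=k^{-1}$ while also $k^{xy}=(k^{-1})^y=k$, forcing the contradiction $k^2=1$. Hence $[G:K]\le 2$. If $K$ is abelian, we obtain (1) with the inversion action on $K$. If $K$ is non-abelian, applying the local analysis to the pair $(g^2,h)$ whenever $g^2$ has order $>2$ --- and computing $(g^2h)^2=h^2$ via the identity $(g^2)^h=g^{-2}$ --- forces every $k\in K$ of order $>2$ to have order exactly $4$, so that $G$ has exponent dividing $4$. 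The principal obstacle is the final step of case (2): exhibiting an elementary abelian subgroup of index $2$ inside $G$. I would use the central involutions $g^2$ (for $g$ of order $4$) together with the many involutions $gh$, $gh^{-1}$ produced by the local analysis to generate an elementary abelian subgroup, and then verify it has index $2$ by a careful accounting of cosets --- a delicate structural step which seems to require one further application of the bracket identity to involutions in order to ensure that the candidate subgroup does not meet the coset of the order-$4$ elements.
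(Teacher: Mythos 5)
The paper offers no proof of this statement to compare against: Theorem~\ref{CasiAntisimetricos} is quoted from \cite{BrochePolcino2007} and used as a black box. Judged on its own terms, your sufficiency argument is correct, and so are your ``local identities'': in characteristic $0$ the vanishing of $[g-g^{-1},h-h^{-1}]$ forces $gh$ to coincide with one of $gh^{-1}$, $g^{-1}h$, $hg$, $h^{-1}g^{-1}$, and after discarding the excluded identifications one gets $(gh)^2=(gh^{-1})^2=1$, whence $(g^2)^h=g^{-2}$. The reduction $[G:K]\le 2$ and the disposal of the case $K$ abelian are also sound.

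The necessity direction is nevertheless incomplete, in two places. First, your assertion that every $k\in K$ of order greater than $2$ has order exactly $4$ is only justified for elements that actually occur in a non-commuting pair $(g,h)$ with $g^2\ne 1\ne h^2$; an element of $K$ of larger order that commutes with the chosen $g$ and $h$ (a central element, or a product of generators) needs a separate argument, say by replacing $g$ with $kg$ and re-running the analysis, and the further jump to ``$G$ has exponent dividing $4$'' is likewise unargued for general elements of $K$. Second, and decisively, in the case $K$ non-abelian the entire content of the theorem is the existence of an elementary abelian subgroup of index $2$, and you explicitly leave this as a plan (``I would use \dots a delicate structural step which seems to require one further application of the bracket identity''). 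You neither specify the candidate subgroup precisely, nor prove it is elementary abelian, nor prove it has index $2$; this is exactly where the work of \cite{BrochePolcino2007} lies (and where a group like $Q_8$, for which $K$ is non-abelian and no such subgroup exists, must be seen to violate the local identities). As it stands the argument is a correct reduction plus an unproven structural claim, so it does not constitute a proof.
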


\section{Sufficiency condition}
In this section we prove the sufficiency part of Theorem~\ref{Main}.
Assume first that $G$ satisfies either condition (1) or condition (2) of Theorem~\ref{Main}.
Then $\breve{G}$ is commutative by Theorem~\ref{CasiAntisimetricos} and therefore $[RG^+,RG^+]$ is commutative by (\ref{RGRGX}).
Thus $RG^+$ is Lie metabeliano.

Secondly suppose that $G$, $B$ and $x$ satisfy condition (3) of Theorem~\ref{Main}. Then clearly all the elements of $G\setminus B$ have order 4 and the elements of $G$ of order 2 are central. Therefore in order to prove that $RG^+$ is Lie metabelian it is enough to show that the following set is commutative
    $$C= \{ [ g+g^{-1}, h+h^{-1}] :g,h\in G,\; g^2\neq 1\neq h^2 \}.$$
As $B$ is abelian, $[ g+g^{-1}, h+h^{-1}]=0$ for elements $g,h\in B$.
If $g\in B$ and $h\in G\setminus B$ then $g^h=g^{-1}$, so  $(g+g^{-1} )h=h(g+g^{-1})$. Therefore, $[g+g^{-1},h+h^{-1}]=0$.
Finally, if $g,h\in G\setminus B$ then $h=bg$ for some $b\in B$.
Since $b^g=b^{-1}$ and $g^2=g^{-2}$, we have that
$$[ g+g^{-1}, h+h^{-1}]=(g+g^{-1})b(g+g^{-1})-b(g+g^{-1})^2=2(b^{-1}-b)(1+g^2).$$
Hence, $C \subseteq \{ 2(b^{-1}-b)(1+g^2):\ b\in B,\; g\in G\setminus B \}\cup \{0\} \subseteq RB$ and thus $C$ is commutative as desired.

Finally, assume that $G$ satisfies condition (4) of Theorem~\ref{Main}. As in the previous case the elements of order 2 are central and hence it is enough to prove that the elements of $C$ commute.
Notice that $G/\z(G)$ has exactly three non-trivial cyclic subgroups, say $\GEN{x\z(G)}$, $\GEN{y\z(G)}$ and $\GEN{z\z(G)}$, and $z=uxy$ for some $u\in \z(G)$.
Moreover $G'=\{1,t=(x,y)\}$ and
    $$(x+x^{-1})(y+y^{-1})(z+z^{-1}) = (x^2y^2+t)(1+x^2)(1+y^2)u.$$
and therefore
    \begin{equation}\label{Suficiency31}
     (1-t)(x+x^{-1})(y+y^{-1})(z+z^{-1}) = 0.
    \end{equation}

Consider $x_1,y_1,x_2,y_2\in G$ with $[x_1+x_1^{-1},y_1+y_1^{-1}]\ne 0$ and $[x_2+x_2^{-1},y_2+y_2^{-1}]\ne 0$.
We have to show that $[[x_1+x_1^{-1},y_1+y_1^{-1}],[x_2+x_2^{-1},y_2+y_2^{-1}]]=0$.
Observe that $y_1\not \in \GEN{\z(G),x_1}$ and $x_1\not \in \GEN{\z(G),y_1}$ and therefore $G=\GEN{\z(G),x_1,y_1}$.
Similarly $G=\GEN{\z(G),x_2,y_2}$.
Moreover $t=(x_1,y_1)=(x_2,y_2)$ and $(y_i+y_i^{-1})(x_i+x_i^{-1})=t(x_i+x_i^{-1})(y_i+y_i^{-1})$ for $i=1,2$, so that
    \begin{equation}\label{Suficiency32}
     [x_i+x_i^{-1},y_i+y_i^{-1}] = (1-t)(x_i+x_i^{-1})(y_i+y_i^{-1}).
    \end{equation}
As $G/\z(G)$ has exactly 3 non-trivial cyclic subgroups then either $x_2\z(G)\in \{x_1\z(G),y_1\z(G)\}$ or $y_2\z(G)\in \{x_1\z(G),y_1\z(G)\}$.
By symmetry, one may assume that $x_2\z(G)=x_1\z(G)$. So $x_2=ux_1$ with $u\in \z(G)$. Moreover either $y_2\in y_1\z(G)$ or $y_2\in x_1y_1\z(G)$.
In the first case $y_2=vy_1$ with $v\in \z(G)$ and
	\begin{eqnarray*}
	 [x_2+x_2^{-1},y_2+y_2^{-1}]&=&(1-t)(ux_1+ux_1^{-1})(vy_1+vy_1^{-1})=(1-t)uv(x_1+x_1^{-1})(y_1+y_1^{-1})\\&=&uv[x_1+x_1^{-1},y_1+y_1^{-1}].
	\end{eqnarray*}
In the second case $\GEN{x_1\z(G)}=\GEN{x_2\z(G)}$, $\GEN{y_1\z(G)}$ and $\GEN{y_2\z(G)}$ are the three different cyclic subgroups of $G/\z(G)$.
Then using (\ref{Suficiency31}) and (\ref{Suficiency32}) and the fact that $t$ is central we have
    \begin{eqnarray*}
    &&[x_1+x_1^{-1},y_1+y_1^{-1}][x_2+x_2^{-1},y_2+y_2^{-1}] = \\
    &&(1-t)(x_1+x_1^{-1})(y_1+y_1^{-1})(1-t)(x_2+x_2^{-1})(y_2+y_2^{-1}) = 0\\
    \end{eqnarray*}
and
    \begin{eqnarray*}
    &&[x_2+x_2^{-1},y_2+y_2^{-1}][x_1+x_1^{-1},y_1+y_1^{-1}] = \\
    &&(1-t)(x_2+x_2^{-1})(y_2+y_2^{-1}) (1-t)(x_1+x_1^{-1})(y_1+y_1^{-1})= 0\\
    \end{eqnarray*}
In both cases $[[x_1+x_1^{-1},y_1+y_1^{-1}],[x_2+x_2^{-1},y_2+y_2^{-1}]]=0$, as desired.
\section{Necessary condition}
Now we assume that $RG^+$ is Lie metabelian and we will prove that $G$ satisfies one of the conditions (1)-(4) of Theorem~\ref{Main}. This is easy if $\breve{G}$ is commutative, by Theorem~\ref{CasiAntisimetricos}. Thus unless otherwise stated we assume that $RG^+$ is Lie metabelian and $\breve{G}=\{g-g^{-1}:g\in G\}$ is not commutative.

A relevant role in the proof is played by the following normal subgroups of $G$:
	\begin{eqnarray*}
	 A&=&\GEN{g\in G : g^2 = 1}, \\
   B&=&\GEN{g\in G : \circ(g)\ne 4 }
	\end{eqnarray*}

\bigskip
\noindent {\bf 4.1} \underline{Properties of $A$}
\medskip

\bigskip
The first lemmas address the properties of $A$. (In fact the first lemma does not use the assumption that $\breve{G}$ is not commutative.)

\begin{lem}\label{PropiedadesA}
\begin{enumerate}
\item\label{ElementsA} Every element of $A$ is of the form $ab$ with $a^2=b^2=1$.
\item\label{ACasiAntisimetrico} $\breve{A}=\{a-a^{-1}:a\in A\}$ is commutative.
\item\label{(x,A)=1ox^2inA} For every $x\in G$ either $(x,A)=1$ or $x^2\in A$.
\end{enumerate}
\end{lem}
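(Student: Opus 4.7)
The plan is to prove the three claims in order, with (1) feeding directly into (2) and (3) handled independently using the same Lie metabelian input. Every element of $A$ is by definition a product of involutions, and Lie metabelianness must collapse any length-$n$ product of involutions to length at most $2$. By induction on the number of factors, it suffices for (1) to check that a product $abc$ of three involutions is already a product of two. If some two of $a,b,c$ commute this is clear: if $b$ and $c$ commute then $bc$ is an involution and $abc=a\cdot(bc)$; if $a$ and $c$ commute use $abc=(ac)(c^{-1}bc)$; and $a,b$ commuting is analogous. So one may assume $a,b,c$ pairwise non-commute. Since $a,b,c\in RG^+$, the identity $[[a,b],[a,c]]=0$ expanded with $a^2=b^2=c^2=1$ and regrouped by pairing each monomial with its inverse becomes
\[
(abac-caba)+(baca-acab)=(abca-acba)+(bc-cb),
\]
an equation in $R\breve{G}$. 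Since $R$ has characteristic $0$ and $(bc)^2\ne 1$ (because $b,c$ do not commute), the nonzero term $bc-cb$ on the right forces its support $\{bc,cb\}$ to overlap with that of some term on the left; a finite case analysis of these overlaps then exhibits in each case either an explicit two-involution factorisation of $abc$ or a contradiction with pairwise non-commutation.

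Part (2) follows at once from (1). For $a\in A$ with $a^2\ne 1$, write $a=a_1a_2$ with $a_i^2=1$; then
\[
a-a^{-1}=a_1a_2-a_2a_1=[a_1,a_2]\in[RG^+,RG^+].
\]
Thus $\breve{A}\subseteq[RG^+,RG^+]$, and the latter is commutative because $RG^+$ is Lie metabelian, giving the result.

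For part (3), suppose $(x,A)\ne 1$ and choose an involution $a$ with $ax\ne xa$. Using the symmetric element $x+x^{-1}$, a direct computation gives
\[
[x+x^{-1},a]=(xa-ax^{-1})+(x^{-1}a-ax)\in[RG^+,RG^+].
\]
I pair this with a second bracket $[a,b]$ for an appropriately chosen involution (a natural candidate is $b=a^x$, a distinct involution since $(a,x)\ne 1$), apply Lie metabelianness, and compare coefficients of the resulting identity in $RG$; the characteristic-zero support constraints should collapse into a coincidence expressing $x^2$ as a product of involutions, giving $x^2\in A$. The main obstacle throughout is the support bookkeeping in (1), where one must enumerate the finitely many admissible identifications among the eight monomials in the displayed identity and verify that each produces an explicit two-involution decomposition of $abc$.
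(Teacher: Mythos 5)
Your parts (1) and (2) are essentially correct and match the paper's method. For (1) you use the double commutator $[[a,b],[a,c]]$ where the paper uses $[[x_1,x_2],[x_2,x_3]]$; either works. In your version the four candidate cancellations of $bc$ are $bc=cb$, $bc=abac$, $bc=baca$ (each contradicting pairwise non-commutation) and $bc=acba$, which gives $abc=cba=(abc)^{-1}$, i.e.\ $(abc)^2=1$ --- so the surviving case makes $abc$ itself an involution rather than a ``two-involution factorisation'', but the conclusion is the same. Part (2) is identical to the paper's argument.

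Part (3) has a genuine gap: you never carry out the computation, and the bracket you propose does not obviously close. With $b=a^x$, expanding $[[x+x^{-1},a],[a,b]]=0$ and asking how the monomial $ax$ cancels, one of the admissible coincidences is $ax=ax^{-1}ax^2a$, which only yields $(a,x^2)=1$ --- no contradiction and no conclusion that $x^2\in A$ (think of $x$ of large order with $x^2$ centralizing $a$). So the ``support constraints should collapse'' step fails for your candidate, at least without further commutators. The paper's choice of second factor is $[xa+(xa)^{-1},a]=[xa+ax^{-1},a]$, i.e.\ the symmetrization of $xa$ (not an involution), paired again with $a$. That identity reduces to
\[
2\bigl(ax^{-2}+x^{-2}a+xax+axaxa-x^2a-ax^2-x^{-1}ax^{-1}-ax^{-1}ax^{-1}a\bigr)=0,
\]
and after discarding $x^2a=xax$ and $x^2a=axaxa$ (both forcing $(x,a)=1$) the only survivors are $x^2a=ax^{-2}$, giving $(x^2a)^2=1$ so $x^2a\in A$, and $x^2=x^{-2}$, giving $x^4=1$; in both cases $x^2\in A$. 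You would need either to adopt this bracket or to supply additional identities to dispose of the $(a,x^2)=1$ branch; as written, part (3) is a plan rather than a proof.
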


\begin{proof}
(\ref{ElementsA}) We have to prove that the product of elements of order 2 of $G$ is also the product of at most two elements of order 2.
By induction it is enough to show that if $x_1,x_2,x_3$ are elements of order 2 in $G$ then $x_1x_2x_3$ is the product of at most two elements of order 2. This is clear if $(x_1,x_2)=1$ or $(x_2,x_3)=1$. So assume that $(x_1,x_2)\ne 1\ne (x_2,x_3)$. If $(x_1,x_3)=1$ then $x_2x_3x_1$ is the product of two elements of order 2 and it is conjugate of $x_1x_2x_3$. Thus we may also assume that $(x_1,x_3)\ne 1$. As, by hypothesis, $RG^+$ is Lie metabelian and $x_1^2=x_2^2=x_3^2=1$, we have
\begin{eqnarray*}
0=[[x_1,x_2],[x_2,x_3]]&=&x_1x_3+x_2x_1x_3x_2+x_2x_3x_2x_1+x_3x_2x_1x_2\\&&-(x_3x_1+x_1x_2x_3x_2+x_2x_1x_2x_3+x_2x_3x_1x_2)
\end{eqnarray*}
Then $x_1x_3$ is one of the elements of the negative part and by assumption it is not any of the first three summands. Therefore $x_1x_3=x_2x_3x_1x_2$ and hence
$(x_1x_2x_3)^2 = x_1 (x_2x_3x_1x_2) x_3 = x_1^2 x_3^2=1$, as desired.

(\ref{ACasiAntisimetrico}) Let $a\in A$ with $a^2\ne 1$. By (\ref{ElementsA}), $a=xy$ with $x^2=y^2=1$. Then $a-a^{-1}=xy-yx=[x,y]$.
Now, using that $RG^+$ is Lie metabelian we deduce that $\{a-a\inv:a\in A\}$ is commutative.

(\ref{(x,A)=1ox^2inA}) Assume that $(x,A)\ne 1$. Then $(x,a)\ne 1$ for some $a\in G$ of order 2. By assumption
\begin{eqnarray*}
0&=&[[x+x^{-1},a],[xa+ax^{-1},a]] \\&=&
2(ax^{-2}+x^{-2}a+xax+axaxa-x^2a-ax^2-x^{-1}ax^{-1}-ax^{-1}ax^{-1}a).
\end{eqnarray*}
Then $x^2a\in \{ax^{-2},x^{-2}a,xax,axaxa\}$. However $x^2a$ is not one of the last two elements because $(x,a)\ne 1$. Therefore $x^2a=ax^{-2}$ or $x^2=x^{-2}$. In the first case $x^2a$ has order 2 and therefore it belongs to $A$. In the second case $x^4=1$. In both cases $x^2\in A$, as desired.
\end{proof}

\begin{lem}\label{Aabelian}
$A$ is abelian.
\end{lem}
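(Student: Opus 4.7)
I argue by contradiction: suppose $A$ is non-abelian. By Lemma~\ref{PropiedadesA}(\ref{ElementsA}), every element of $A$ is a product of at most two involutions, so the failure of commutativity already manifests between two involutions $x,y\in G$ with $xy\neq yx$. Setting $c=xy\in A$ gives $c^{2}\neq 1$ and $[x,y]=c-c^{-1}\neq 0$. Moreover $c$ does not centralize $A$, since $cx=xc$ would force $xyx=y$ and thus $(xy)^{2}=1$; hence Lemma~\ref{PropiedadesA}(\ref{(x,A)=1ox^2inA}) yields $c^{2}\in A$.

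Next, I plan to produce an element $g\in G$ with $g\notin A$ and $(g,A)\neq 1$, so that by Lemma~\ref{PropiedadesA}(\ref{(x,A)=1ox^2inA}) we also have $g^{2}\in A$ and $g^{2}\neq 1$. A direct expansion shows that if $h\in A$ and $g\in C_{G}(A)$, then $(g-g^{-1})(h-h^{-1})=(h-h^{-1})(g-g^{-1})$. Combined with the commutativity of $\breve{A}$ from Lemma~\ref{PropiedadesA}(\ref{ACasiAntisimetrico}), this means that if every element of $G\setminus A$ centralized $A$, then $\breve{G}$ would already be commutative: indeed $G=A\cup C_{G}(A)$ in that case, and a union of two subgroups being a subgroup forces either $A\subseteq C_{G}(A)$ (so $A$ is abelian, against our assumption) or $C_{G}(A)\subseteq A$ (so $G=A$, whence $\breve{G}=\breve{A}$ is commutative, against the standing hypothesis).

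The decisive step is to evaluate the Lie metabelian identity $[[x,y],[x,g+g^{-1}]]=0$, in which both brackets lie in $[X^{+},X^{+}]$. Expanding in the $G$-basis of $RG$ and collapsing repetitions via $x^{2}=y^{2}=1$, the result rearranges as a sum of eight differences $w-w^{-1}$ indexed by pairs of inverse words in $x,y,g,g^{\pm 1}$ that must equal zero. Comparing coefficients of specific group elements (for instance $gy$ or $xyxg$) in this expansion, and using $c^{2}\in A$ together with $g^{2}\in A$ and $(g,A)\neq 1$, should force either $c^{2}=1$ or that $g$ centralizes $A$, a contradiction. The main obstacle is the case analysis required according to whether $g$ commutes with $x$, with $y$, or with neither: the hardest case, where $g$ commutes with neither, relies most substantively on $g^{2}\in A$, and may require iterating the argument by replacing $g$ with $gx$ or $gy$ (both still outside $C_{G}(A)$) in order to extract the contradiction cleanly.
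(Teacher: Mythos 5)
Your overall strategy matches the paper's in outline: argue by contradiction, use Lemma~\ref{PropiedadesA}(\ref{ElementsA}) to reduce the failure of commutativity to a pair of non-commuting involutions, produce an element of $G\setminus A$ that does not centralize $A$, and then exploit double-commutator identities. Your two-subgroups argument for producing such a $g$ is correct and is a slightly slicker variant of the paper's ``one of $x,xb,xc,xbc$ commutes with neither $b$ nor $c$'' step. The problem is that the proposal stops exactly where the real work of this lemma begins, and what you sketch for that part would not suffice.

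Concretely, you propose to derive the contradiction from the single identity $[[x,y],[x,g+g^{-1}]]=0$ by comparing coefficients, asserting that this ``should force either $c^{2}=1$ or that $g$ centralizes $A$.'' Comparing supports in one such $16$-term expansion only yields a disjunction of several group-theoretic relations, each of which is consistent on its own; the paper's analogous first computation, $[b-b^{-1},[x+x^{-1},c]]=0$, still leaves four live cases (a.1)--(a.4) after the easy exclusions, a second identity is needed to get cases (b.1)--(b.3), and then a dozen further double commutators are required to kill the resulting sub-cases. Moreover, those later computations lean heavily on structural information about $A$ that your plan never invokes: since $\breve{A}$ is commutative by Lemma~\ref{PropiedadesA}(\ref{ACasiAntisimetrico}), Theorem~\ref{CasiAntisimetricos} applies to $A$ and supplies either an involution inverting all of $\GEN{g\in A:g^{2}\ne 1}$ or an elementary abelian subgroup of index $2$, and facts such as $b^{c}=b^{-1}$ or $b^{4}=1$ are used in essentially every branch. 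Without this dichotomy I see no way to close the argument. A further (repairable but unaddressed) defect: your $g$ is only known to fail to centralize \emph{some} element of $A$, so it may commute with $x$, $y$ and $c=xy$, making $[x,g+g^{-1}]=0$ and your key identity vacuous. As it stands the proposal is a plausible opening, not a proof.
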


\begin{proof}
Recall that we are assuming that $\breve{G}$ is not commutative and $A$ is not abelian. By statement (\ref{ACasiAntisimetrico}) of Lemma~\ref{PropiedadesA}, $\breve{A}$ is commutative. Then $G\ne A$ and $A$ satisfies one of the two conditions of Theorem~\ref{CasiAntisimetricos}.
In both cases $A$ contains elements $b$ and $c$ with $b^2\ne 1=c^2$ and $(b,c)\ne 1$ and $(b^2,c)=1$. Moreover, if $x\in G\setminus A$ then either $x$, $xb$, $xc$ or $xbc$ do not commute with neither $b$ nor $c$. Thus we may assume that $(x,b)\ne 1 \ne (x,c)$.

As in the proof of statement (\ref{ACasiAntisimetrico}) of Lemma~\ref{PropiedadesA}, $b-b\inv$ is the additive commutator of two elements of order 2.
Then
\begin{eqnarray*}
0&=&[b-b^{-1},[x+x^{-1},c]] \\
 &=& bxc+b^{-1}cx^{-1}+b^{-1}cx+bx^{-1}c+cx^{-1}b+cxb+x^{-1}cb^{-1}+xcb^{-1} \\
 & & -bcx-bcx^{-1}-b^{-1}xc-b^{-1}x^{-1}c  -xcb-x^{-1}cb -cxb^{-1} -cx^{-1}b^{-1}
\end{eqnarray*}
and therefore
$$
 bxc\in \{bcx,bcx^{-1}, b^{-1}xc, b^{-1}x^{-1}c, xcb, x^{-1}cb, cxb^{-1}, cx^{-1}b^{-1}\}
$$
If $bxc=bcx$ then $(c,x)=1$, a contradiction. If $bxc=bcx^{-1}$ it follows that $(xc)^2=1$ and thus $x\in A$, a contradiction. If $bxc=b^{-1}xc$ then $b^2=1$, a contradiction. If $bxc=cx^{-1}b^{-1}$ then $(bxc)^2=1$ and hence $x\in A$, a contradiction. Therefore only four possibilities remains:
\begin{itemize}
\item[(a.1)] $bxc=b^{-1}x^{-1}c$ and thus $b^2x^2=1$.
\item[(a.2)] $bxc=xcb$.
\item[(a.3)] $bxc=cxb^{-1}$
\item[(a.4)] $bxc=x^{-1}cb$.
\end{itemize}

Using similar arguments for $[b-b^{-1},[xc+cx^{-1},c]]=0$ we get that
$$
 bx\in \{bcxc,bx^{-1}, b^{-1}x, cx^{-1}cb, xb, x^{-1}b^{-1}, b^{-1}cx^{-1}c, cxcb^{-1}\}
$$
If $bx=bcxc$ then $(x,c)=1$, a contradiction. If $bx=bx^{-1}$ then $x^2=1$ and thus $x\in A$, again a contradiction. If $bx=b^{-1}x$ then $b^2=1$, a contradiction.  If $bx=x^{-1}b^{-1}$, then $(bx)^2=1$ and thus $x\in A$, again a contradiction. Therefore, since $(b,x)\neq 1$ only three possibilities remains, namely
\begin{itemize}
 \item[(b.1)] $bxb^{-1}=cx^{-1}c$
 \item[(b.2)] $b^2(xc)^2=1 $
 \item[(b.3)] $bxb=cxc$
 \end{itemize}

We now consider the two cases for $A$ mentioned at the beginning of the proof.

\emph{Case 1}: $K=\GEN{g\in A\;:g^2\ne 1}$ is abelian and $A=K\rtimes \GEN{\alpha}_2$ with $k^\alpha=k\inv$ for every $k\in K$.
In particular $b^c=b\inv$.

If (b.1) holds $bxb^{-1}=cx^{-1}c$ then $(cbx)^2=(cb)^2=1$ and thus $x\in A$, a contradiction.

Assume now that (b.2)  $b^2(xc)^2=1 $ holds.
Then $bxc=b^{-1}cx^{-1}$. If case (a.1) holds then  $bx=b^{-1}x^{-1}$ and therefore it follows that $b^{-1}cx^{-1}=bxc=b^{-1}x^{-1}c$ and thus $(c,x^{-1})=1$, a contradiction. If case (a.2) holds then $b^{-1}cx^{-1}=bxc=xcb$ and  $(xcb)^2=xcbb^{-1}cx^{-1}=1$ and thus $x\in A$ a contradiction. If case (a.3) holds then $cbx^{-1}=b^{-1}cx^{-1}=bxc=cxb^{-1}$ and so $bx^{-1}=xb^{-1}$ which implies that $(xb^{-1})^2=1$ and hence $x\in A$, a contradiction. If (a.4) $bxc=x^{-1}cb=x^{-1}b^{-1}c$ holds then $bx=x^{-1}b^{-1}$ and therefore $bx\in A$ again a contradiction.

Finally assume that (b.3) $bxb=cxc$ holds. Then $bx=cxcb^{-1}=cxbc$ and thus $bxc=cxb$. If case (a.1) holds then  $bx=b^{-1}x^{-1}$ and therefore it follows that $(xcb)^2=xcbbxc=xcbb^{-1}x^{-1}c=1$ and thus $x\in A$, a contradiction. If case (a.2) holds then $cxb=bxc=xcb$ and thus $(x,c)=1$, a contradiction. If case (a.3) holds then $cxb=bxc=cxb^{-1}$ and hence $b=b^{-1}$, a contradiction. If (a.4) holds using the same argument as in case (b.2) we get a contradiction,  that finishes the proof in this case.

\emph{Case 2}. $A$ contains an elementary abelian subgroup $C$ of index 2 in $G$. We can assume that $c\in C$, $b^2\in C$ and $b$ has order 4.

Assume first that (b.1) $bxb^{-1}=cx^{-1}c$ holds. Then $(cbx)^2=(cb)^2=(bc)^2$ and thus $(bc)^2\neq 1\neq (cb)^2$, because $cbx\not\in A$.
If case (a.1) holds then
 \begin{eqnarray*}
[[b+b\inv,c],[c,x+x\inv]] &=&
4(bx+b^{-1}x+cbxc+cb^{-1}xc-b^{-1}cxc-bcxc-cbcx-cb^{-1}cx)\\
&=& 4(bx+b^{-1}x+cbxc+cb^{-1}xc-xb-x^{-1}b-cbcx-cb^{-1}cx)\\
&=&0.
\end{eqnarray*}
Then $bx\in \{xb,x^{-1}b,cbcx, cb^{-1}cx\}$. Since $(b,x)\ne 1$ and $(b,c)\ne 1$ it follows that $bx=cb^{-1}cx$. Then $cb=b^{-1}c$ and hence $(cb)^2=1$ a contradiction.

If case (a.2) holds then
\begin{eqnarray*}
[[b+b\inv,c],[c,x+x\inv]] &=&
4(cb^{-1}xc+cb^{-1}x^{-1}c+cbxc+cbx^{-1}c\\
&&-b^{-1}cxc-bcxc-b^{-1}cx^{-1}c-bcx^{-1}c)\\
&=&
4(cb^{-1}xc+b^{-1}x+cbxc+bx\\
&&-x^{-1}b^{-1}-bcxc-xb^{-1}-bcx^{-1}c)\\
&=&0.
\end{eqnarray*}
Thus
$bx\in \{x^{-1}b^{-1},bcxc,xb^{-1}, bcx^{-1}c\}$. Since $(bx)^2\ne 1\ne (c,x)$ and $(cx)^2\ne 1$ it follows that $bx=xb^{-1}$. Then using (a.2) we get that $b^{-1}c=cb$ and therefore $(cb)^2=1$ a contradiction.

If case (a.3) holds then $x^2=(bc)^2=(cb)^2$ and thus $(b,x^2)=1=(x^2,c)$.  Hence we have that
  \begin{eqnarray*}
&&[[b+b\inv,c],[b+b\inv,cx+(cx)\inv]] =\\
&&  2(3x^{-1} + 2b^2x^{-1} + 2x^{-1}b^2 + 2bx^{-1}b + 2b^{-1}x^{-1}b^{-1} + 3bx^{-1}b^{-1} + b^{-1}x^{-1}b + b^2x^{-1}b^2\\
  &&-3x - 2b^2x - 2xb^2 - 2bxb - 2b^{-1}xb^{-1} - 3bxb^{-1} - b^{-1}xb - b^2xb^2)= 0,
\end{eqnarray*}
and hence $x^{-1}\in\{x, b^2x, xb^2, bxb, b^{-1}xb^{-1}, bxb^{-1}, b^{-1}xb, b^2xb^2\}$. Notice that $b^2\neq x^2$ because in this case $b^2=(bc)^2$ and therefore $(b,c)=1$, a contradiction. Thus, since $x^2\neq 1$, $b^2\neq x^2$,  $(bx)^2\neq 1 \neq (xb^{-1})^2$ it follows that $x^{-1}=b^2xb^2$. Then $(b^2x)^2=1$ and hence $b^2x\in A$ a contradiction.

Finally, if case (a.4) holds then $bxc=x^{-1}cb=cbx$ and $(bx)^2=(bc)^2=(cb)^2=(xb)^2$.
Thus the following commutator
\begin{eqnarray*}
 0&=& [[c,b+b\inv],[c,x+x\inv]] \\
&=&
xb+xb^{-1}+x^{-1}b+x^{-1}b^{-1}+cbcx+cbcx^{-1}+cb^{-1}cx+cb^{-1}cx^{-1}+bcxc\\
&&+bcx^{-1}c+b^{-1}cxc+b^{-1}cx^{-1}c +cxbc+cxb^{-1}c+cx^{-1}bc+cx^{-1}b^{-1}c\\
&& -bx-bx^{-1}-b^{-1}x-b^{-1}x^{-1}-cbxc-cbx^{-1}c-cb^{-1}xc  -cb^{-1}x^{-1}c\\
&&-cxcb-cxcb^{-1}-cx^{-1}cb-cx^{-1}cb^{-1}-xcbc-xcb^{-1}c-x^{-1}cbc-x^{-1}cb^{-1}c\\
&=&
2(xb+xb^{-1}+x^{-1}b+2x^{-1}b^{-1}+b^2xb+b^2xb^{-1}+b^2x^{-1}b^{-1}\\
&& -2bx-b^{-1}x-bx^{-1}-b^{-1}x^{-1}-bxb^2-bx^{-1}b^2-b^{-1}x^{-1}b^2).
\end{eqnarray*}
Moreover $xb\not\in \{bx,b^{-1}x^{-1},bx^{-1}b^2\}$ because $(b,x)\ne 1$, $xb\not\in A$ and $xb^{-1}\not\in A$. Therefore
$xb \in \{bx^{-1},b^{-1}x,bxb^2,b^{-1}x^{-1}b^2\}.$
Hence either $x^b=x^{-1}$ or $b^x=b^{-1}$.
But in the first case $x^{-1}bc=bxc=x^{-1}cb$, in contradiction with $(b,c)\ne 1$.
Thus $b^x=b^{-1}$ and taking the following commutator we have
\begin{eqnarray*}
 0&=& [[b+b\inv,c],[b+b\inv,cx+cx\inv]] =16(cb+cb^{-1}-bc+b^{-1}c)x^{-1}cb^{-1}.
\end{eqnarray*}
Hence $cb=b^{-1}c$ and thus $(bx)^2=(bc)^2=1$ in contradiction with $bx\not\in A$.

Secondly assume that $(b.2)$ $b^2(xc)^2=1$ holds. In case (a.1) we have $x^2=b^2=(xc)^2$ and therefore $x=cxc$ in contradiction with $(x,c)\ne 1$. In case (a.2), $(bxc)^2=b(xc)^2b=1$, in contradiction with $bxc\not\in A$. For cases (a.3) and (a.4)  we consider the following double commutator
\begin{eqnarray*}
 0 &=& [[c,b+b\inv],[c,cx+(cx)\inv]] \\
   &=& 4(bxc +b^{-1}xc+cbcx^{-1}c+cb^{-1}cx^{-1}c            -bx^{-1}c-b^{-1}x^{-1}c-cbcxc-cb^{-1}cxc)
\end{eqnarray*}
Then $bxc\in \{bx^{-1}c,b^{-1}x^{-1}c,cbcxc,cb^{-1}cxc\}$.
However $bxc \not\in \{bx\inv c,cbcxc\}$, since $x^2\ne 1\ne (b,c)$.
Thus either $b^2=x^2$ or $b^c=b\inv$. If $b^2=x^2$ by (b.2) we have that $x^2=(xc)^2$ and hence $(x,c)=1$, a contradiction.
Thus $b^c=b\inv$. In case (a.3) since $b^2=(xc)^2$ and $b^c=b^{-1}$ we get that $cx^{-1}=(xc)^3=bcxb^{-1}$ and thus $x^{-1}=b^{-1}xb^{-1}$. Thus $(bx^{-1})^2=1$  yielding to a contradiction with $x\in A$. In case (a.4) we get that $bx=x^{-1}cbc=x^{-1}b^{-1}$ and hence $(bx)^2=1$, again a contradiction

Finally assume that (b.3) $bxb=cxc$ holds.

In case (a.1) $x^2=b^2$. Now consider the commutator
\begin{eqnarray*}
[[b+b\inv, c],[b+b\inv,x+x\inv]] &=& 8(cbxb^{-1}+cb^{-1}xb^{-1}+bcbx^{-1}+b^{-1}cbx^{-1}\\&&-cx^{-1}-cb^2x^{-1}-bcxb^{-1}-b^{-1}cxb^{-1}) =0.
\end{eqnarray*}

Then it follows that $cbxb^{-1}\in\{-cx^{-1},cb^2x^{-1},bcxb^{-1},b^{-1}cxb^{-1}\}$. If $cbxb^{-1}=cx^{-1}$ then $bxb^{-1}=x^{-1}$ and since $x^2=b^2$ we get that $bxb=x$. Therefore by (b.3) we obtain that $cxc=bxb=x$ and hence $(c,x)=1$, a contradiction. If $cbxb^{-1}=cb^2x^{-1}=cx$ it follows that $(b,x)=1$, a contradiction. If $cbxb^{-1}=bcxb^{-1}$ then $(b,c)=1$, a contradiction. Finally if $cbxb^{-1}=b^{-1}cxb^{-1}$ then $cb=b^{-1}c$ and hence $(bcx)^2=bcxccbcx=bbxbcbcx=b^2x^2=1$, again a contradiction.

In case (a.2) notice that we have $(b^2,x)=1$.
In fact $b^2xc=bxcb=xcb^2=xb^2c$.
Now we consider
\begin{eqnarray*}
0&=& [[c,b+b\inv],[c,x+x\inv]] \\ &=&
4(bcx+b^{-1}cx+bcx^{-1}+b^{-1}cx^{-1}  -cbx-cb^{-1}x-cbx^{-1}-cb^{-1}x^{-1})c
\end{eqnarray*}
Then $bc \in \{cb\inv, cbx^{-2}, cb\inv x^{-2}\}$, because $bc\ne cb$.
But if $bc=cb\inv$ then $bxc=xb^{-1}c$ and so $bx=xb^{-1}$.
Therefore  $cxc=bxb=x$ and thus $(c,x)=1$, a contradiction.
Hence $x^{-2} \in \{(b,c),(bc)^2\}\in C$ and thus $x$ has order 4.
But if $x^2=(b,c)$, since $(b^2,x)=1$ then $(bcx)^2= bcx(cbx^2)x=bcxcbx^3=b^2xb^2x^3=1$ and so $bcx\in A$, a contradiction.
On the other hand,  if $x^2=(bc)^2$, by (a.2) we have that $bx=xcbc$ and then $cxc=bxb=x(cb)^2=x^{-1}$ yielding to a contradiction with $(cx)^2=1$.

In case (a.3) we have that  $bxc=cxb^{-1}$. Recall that by (b.3) we have that $cx=bxbc$ and therefore $bxc=cxb^{-1}=bxbcb^{-1}$, in contradiction with $(b,c)\ne 1$.

Finally assume that (a.4) $bxc=x^{-1}cb$ holds. Furthermore, from the relations $c^2=b^4=(c,b^2)=1$, $cxc=bxb$ we get the following computation

\begin{eqnarray*}
 0&=& [[c,b+b\inv],[c,x+x\inv]] \\
&=&
4xb+2xb^{-1}+2x^{-1}b+4x^{-1}b^{-1}+2b^2x^{-1}b^{-1}+2b^2xb\\
&& -4bx-2bx^{-1}-2b^{-1}x-4b^{-1}x^{-1}-2bxb^2-2b^{-1}x^{-1}b^2
\end{eqnarray*}

Moreover $xb\not\in \{bx,b^{-1}x^{-1},bxb^2\}$ because $(b,x)\ne 1$, $bx\not\in A$ and $xb\ne cxcb=bxb^2$. Thus
$xb \in \{bxb^2,b^{-1}x^{-1}b^2\}.$ If $xb=bxb^2$ it follows that $x=bxb=cxc$, in contradiction with $(b,c)\ne 1$. Thus $xb=b^{-1}x^{-1}b^2$ and hence $bx=x^{-1}b$. Therefore by (a.4) we get that $x^{-1}bc=bxc=x^{-1}cb$ yielding to a contradiction with $(b,c)\neq 1$. This finishes the proof of the lemma.
\end{proof}

\begin{lem}\label{AAbelConsecuencias}
Let $a\in A$ and $x,y\in G$ be such that $(a,x)\ne 1 \ne (a,y)$. Then
\begin{enumerate}
\item\label{Orden4} $x$ and $y$ have order 4.
\item\label{CuadradoCentral} $(x^2,y)=(x,y^2)=1$.
\item\label{Traza1} $aa^xa^ya^{xy}=1$.
\item\label{GModuloAAbeliano} $(x,y)\in A$.
\end{enumerate}
\end{lem}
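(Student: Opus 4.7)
The plan is to exploit the Lie metabelian identity $[[u,v],[w,z]]=0$ for $u,v,w,z\in RG^+$, applied to elements of $A$ and to symmetrized group elements $g+g^{-1}$.

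For (\ref{Orden4}): since $A$ is abelian by Lemma~\ref{Aabelian}, the hypothesis $(a,x)\ne1$ forces $x\notin A$, and Lemma~\ref{PropiedadesA}(\ref{(x,A)=1ox^2inA}) then gives $x^2\in A$. Moreover, by Lemma~\ref{PropiedadesA}(\ref{ElementsA}) every element of $A$ is a product of two commuting involutions, hence has order dividing $2$; so $A$ is elementary abelian. Therefore $x^4=(x^2)^2=1$ and, since $x\notin A$, $\circ(x)=4$. The same argument applies to $y$. In particular, every $a\in A$ satisfies $a^2=1$ and so belongs to $RG^+$.

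The main calculation for (\ref{CuadradoCentral}) and (\ref{Traza1}) is the identity
\begin{equation*}
[[a,x+x^{-1}],[a,y+y^{-1}]]=0,
\end{equation*}
valid by Lie metabelianness. Put $P=(x+x^{-1})-(x^a+(x^a)^{-1})$ and $Q=(y+y^{-1})-(y^a+(y^a)^{-1})$. Using $a^2=1$ and the identity $ga=ag^a$ for $g\in G$, direct computation gives $[a,x+x^{-1}]=aP$ and $[a,y+y^{-1}]=aQ$. Conjugation by $a$ swaps the two pairs in $P$, so $P^a=-P$ and hence $Pa=-aP$; similarly $Qa=-aQ$. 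Substituting, the outer bracket collapses to $-[P,Q]$, and therefore $PQ=QP$ in $RG$. Expanding this identity and using the $R$-linear independence of distinct group elements, the monomials free of $a$ yield the equation $(x+x^{-1})(y+y^{-1})=(y+y^{-1})(x+x^{-1})$; comparing coefficients forces $xy$ to coincide with one of $yx,\,yx^{-1},\,y^{-1}x,\,y^{-1}x^{-1}$. In each of these four cases, using $\circ(x)=\circ(y)=4$, a direct check shows $(x^2,y)=(x,y^2)=1$, proving (\ref{CuadradoCentral}). Matching the remaining monomials, those involving $x^a$ or $y^a$, yields the multiplicative relation $aa^xa^ya^{xy}=1$ of (\ref{Traza1}).

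For (\ref{GModuloAAbeliano}): set $c=(x,y)$. From (\ref{CuadradoCentral}), $y^{-1}xy=xc$ and $(xc)^2=y^{-1}x^2y=x^2$, giving $cxc=x$, whence $c^x=c^{-1}$; symmetrically $c^y=c^{-1}$. Then $c^{x^2}=c=c^{y^2}$, so $x^2$ and $y^2$ commute with $c$. A short computation gives $(xy)^2=x^2y^2c$ and hence $(xy)^4=(x^2)^2(y^2)^2c^2=c^2$, using (\ref{Orden4}). Applying Lemma~\ref{PropiedadesA}(\ref{(x,A)=1ox^2inA}) to $xy$: either $(xy)^2\in A$, whence $(xy)^4=1$, so $c^2=1$ and $c\in A$; or $(xy,A)=1$, in which case (\ref{Traza1}) forces $a^x=a^y$, and a further double commutator computation handles this last case.

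The main obstacle is the combinatorial casework converting $PQ=QP$ into (\ref{CuadradoCentral}) and (\ref{Traza1}): while each step is direct, systematically enumerating all coincidences among the $32$ monomials and matching coefficients is the bulk of the work, much as in the proof of Lemma~\ref{Aabelian}. The bad case $(xy,A)=1$ in (\ref{GModuloAAbeliano}) similarly requires additional bookkeeping.
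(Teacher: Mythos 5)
Your part (\ref{Orden4}) is correct and matches the paper, and the algebraic reduction $[a,x+x^{-1}]=aP$, $Pa=-aP$, hence $[[a,x+x^{-1}],[a,y+y^{-1}]]=-[P,Q]$, is a valid and rather elegant observation. The gap is in the very next step. From $PQ=QP$ you claim that ``the monomials free of $a$'' give $(x+x^{-1})(y+y^{-1})=(y+y^{-1})(x+x^{-1})$. There is no such separation: $x^a=axa$ and $y^a=aya$ are simply group elements, and the support of $PQ-QP$ mixes all $32$ products. To cancel the term $xy$ you must allow it to equal any one of the $16$ negatively-signed products $x^{\epsilon}(y^a)^{\delta}$, $(x^a)^{\epsilon}y^{\delta}$, $y^{\epsilon}x^{\delta}$, $(y^a)^{\epsilon}(x^a)^{\delta}$. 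Four of these (those giving $(a,x)=1$, $(a,y)=1$, $x\in A$ or $y\in A$) are excluded by hypothesis, and four are the cases $y^{\pm1}x^{\pm1}$ you treat; but at least eight remain (for instance $xy=(y^a)^{-1}(x^a)^{-1}$, i.e.\ $(axy)^2=1$, or $xy=x^{-1}y^a$, or $xy=(yx)^a$), and these neither contradict the hypotheses nor visibly yield $(x^2,y)=(x,y^2)=1$. So the asserted four-way case split is not a consequence of $PQ=QP$, and the ``bulk of the work'' you defer is not mere bookkeeping — it is the missing argument. The paper sidesteps this by proving (\ref{CuadradoCentral}) with a double commutator not involving $a$ at all, namely $[[x+x^{-1},y+y^{-1}],[xy+(xy)^{-1},y+y^{-1}]]=0$, and only brings $a$ back in afterwards.

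The later parts inherit the problem. For (\ref{Traza1}) you again assert that ``matching the remaining monomials'' produces $aa^xa^ya^{xy}=1$; the paper instead deduces it in one line from (\ref{CuadradoCentral}) applied to the pair $(ax,y)$: both $x^2$ and $(ax)^2=aa^xx^2$ commute with $y$, so $(aa^x)^y=aa^x$, which is exactly $aa^xa^ya^{xy}=1$ since $A$ is elementary abelian — you should adopt that derivation. For (\ref{GModuloAAbeliano}) your computation $(xy)^4=(x,y)^2$ is fine and the branch $(xy)^2\in A$ does give $(x,y)\in A$, but the branch $(xy,A)=1$ (where $a^x=a^y$) is dispatched with the phrase ``a further double commutator computation handles this last case,'' which is precisely where the paper has to work hardest: it first proves the claim $x^2=y^2$ is impossible via $[[x+x^{-1},xy+(xy)^{-1}],[x+x^{-1},(xy)^2x+((xy)^2x)^{-1}]]$ and then reaches the final contradiction with $[[x+x^{-1},a],[x+x^{-1},xz+(xz)^{-1}]]$ for $z=xy$. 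As it stands the proposal does not constitute a proof of parts (2)--(4).
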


\begin{proof}
(\ref{Orden4}) is a consequence of statement~(\ref{(x,A)=1ox^2inA}) of Lemma~\ref{PropiedadesA} and the fact that $A$ is abelian.

(\ref{CuadradoCentral}) Consider
$$\matriz{{l}
[[x+x^{-1},y+y^{-1}],[(xy)+(xy)^{-1},y+y^{-1}]] = \\
2 y^{-1} + 3xy^{-1}x  + xyxy^2 + y^{-1}xy^{-1}xy + y^{-1}xyxy^{-1} + 2yxyxy + 2y^{-1}xy^2x  \\ 
+y^{-1}xyx^{-1}y +  x^{-1}y^{-1}xy^2 + yx^{-1}y^{-1}xy + yxyx^{-1}y^{-1}  + 2x^2y^{-1}   \\ 
 + x^{-1}yxy^2       
+ y^{-1}xyx^{-1}y^{-1} 
+ yx^{-1}yxy           
+ yxyx^{-1}y           \\
 + 3xyx
+ xy^{-1}xy^2
+ 2y^{-1}xyxy
+ yxy^{-1}xy
+ yxyxy^{-1}
\\ 
+ 2y^{-1}xy^2x^{-1}\\
- 2y
- 3x^{-1}y^{-1}x^{-1}
- 3x^{-1}yx^{-1}
- 2yx^2
- 2xy^2xy
- 2x^{-1}y^2xy \\
- 2y^{-1}x^{-1}y^{-1}x^{-1}y^{-1}
- 2y^{-1}x^{-1}y^{-1}x^{-1}y
- y^{-1}x^{-1}y^{-1}xy^{-1}
- y^{-1}x^{-1}yx^{-1}y^{-1} \\
- y^{-1}x^{-1}yx^{-1}y
- y^{-1}x^{-1}yxy^{-1}
- y^{-1}xy^{-1}x^{-1}y^{-1}
- y^{-1}xy^{-1}x^{-1}y
- y^2x^{-1}y^{-1}x^{-1}
- y^2x^{-1}y^{-1}x \\
- y^2x^{-1}yx^{-1}
- y^2x^{-1}yx
- yx^{-1}y^{-1}x^{-1}y^{-1}
- yx^{-1}y^{-1}x^{-1}y
- yxy^{-1}x^{-1}y^{-1}
- yxy^{-1}x^{-1}y
}$$

By assumption the result of the previous calculation should be 0. Notice that if $xy$ and $xy^{-1}$ belong to $A$ then since $A$ is abelian $(x^2,y)=(y^2,x)=1$ as desired. So assume that $xy$ and $xy^{-1}$ do not belong to $A$.
Also we have that $y$ should appear in the support of the positive part. It cannot be one of the elements of the first line (after the equality) because neither $y,xy^{-1},xy$ nor $xy^2$ have order 2, as they do not belong to $A$.
If $y$ belongs to the support of the second line then $(x,y)=1$ or $x^2=y^2$, as desired.
If $y$ belongs to the support of the third line then $y^x=y^{-1}$ and if $y$ belongs to the fourth line then $x^y=x^{-1}$. In both cases $(x,y)=1$, as desired. Finally if $y=y^{-1}xy^2x^{-1}$ then $(x,y^2)=1$. Since $x$ and $y$ play symmetric rolls it also follows that $(x^2,y)=1$.

(\ref{Traza1}) By (\ref{CuadradoCentral}), both $x^2$ and $(ax)^2=aa^xx^2$ commute with $y$ and hence $a^ya^{xy}=(aa^x)^y=aa^x$. Thus $aa^xa^ya^{xy}=1$, as desired.

(\ref{GModuloAAbeliano}) By means of contradiction assume that $(x,y)\not\in A$. Using (2) we have $1\ne (x,y)^2 = (x^2y^2 (xy)^2)^2 = (xy)^4$.

\textbf{Claim}: $x^2=y^2$. By means of contradiction assume $x^2\ne y^2$ and consider the following double commutator.
$$\matriz{{l}
[[x+x^{-1},xy+(xy)^{-1}],[x+x^{-1},(xy)^2x+((xy)^2x)^{-1}]] = \\
8(y^{-1}+x^2y^{-1}+(xy)^2y+xyx^{-1}y^2+(yx)^2y^{-1}+y(xy)^2x^2y^2+(xy)^4y+(xy)^{3}x^{-1}y^2 \\
-y-yx^2-(yx)^2y-xyx-xyx^{-1}-(yx)^{3}x-(xy)^{3}x-(xy)^{3}x^{-1}).}$$
By assumption this is 0. Having in mind that $y\ne y^{-1}$, $x^2\ne y^2$,  and $(xy)^4\ne 1$, (in particular $(xy)^2\ne 1$, $(yx)^2y^2\ne 1\ne(xy)^2x^2y^2$), and comparing $y$ with the elements with positive coefficient we deduce that either $xyx^{-1}y=1$ or $(xy)^4=x^2$. In the first cases $(x,y)=y^2\in A$, yielding a contradiction. We conclude that $(xy)^4=x^2$ and hence $(xy)^4=(yx)^4$. Claiming symmetry we deduce that $(yx)^4 = y^2$. Then $y^2=(yx)^4=(xy)^4 =x^2$, again a contradiction. This finishes the proof of the claim.

Let $z=xy$. Then $z^2=(x,y)\not\in A$.
Consider the following double commutator
$$\matriz{{l}
[[x+x^{-1},a],[x+x^{-1},xz+(xz)^{-1}]] = \\
8(azx+azx^{-1}+a^xz^{-1}x^{-1}+a^xz^{-1}x-az^{-1}x^{-1}-az^{-1}x-a^xzx^{-1}-a^xzx)=0
}$$
Then $azx\in\{az^{-1}x^{-1}, az^{-1}x, a^xzx^{-1}, a^xzx\}$. If $azx=az^{-1}x^{-1}$ then $z^2=x^{-2}=x^2$ and thus $z^4=1$, a contradiction. If $azx=az^{-1}x$ then $z^2=1$, a contradiction. If $azx=a^xzx^{-1}$ then $a=a^xx^2=x^{-1}ax^{-1}$ and therefore $(ax)^2=1$, a contradiction because $x\not\in A$. Finally $azx=a^xzx$ and hence $a=a^x$ again a contradiction because $(a,x)\neq 1$.

\end{proof}

\begin{lem}\label{ACentral}
$A\subseteq \z(G)$.
\end{lem}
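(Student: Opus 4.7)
The plan is to argue by contradiction: suppose there are $a\in A$ and $x\in G$ with $(a,x)\neq 1$, and derive a contradiction. First I would collect structural consequences of the earlier lemmas. Since $A$ is abelian by Lemma~\ref{Aabelian} and generated by involutions, it is elementary abelian, so $a^2=1$. By Lemma~\ref{PropiedadesA}(\ref{(x,A)=1ox^2inA}) and Lemma~\ref{AAbelConsecuencias}(\ref{Orden4}), $x$ has order exactly $4$, $x^2\in A$, and $x^2$ centralises $a$. Setting $b:=a^x$, we have $b\in A\setminus\{a\}$, $b^2=1$, $b^x=a$ and $ab=ba$, giving the clean relations $xa=bx$, $ax=xb$, $x^{-1}a=bx^{-1}$, $ax^{-1}=x^{-1}b$, together with the useful identity $(x+x^{-1})(x-x^{-1})=x^2-x^{-2}=0$, since $x^4=1$.

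Next I would record a short list of Lie bracket formulas that describe how $a$ and $b$ interact with $x$. Direct computation gives
$$[a,x+x^{-1}]=(a-b)(x+x^{-1}),\qquad [a,ax+x^{-1}a]=(1-ab)(x-x^{-1}),$$
and more generally, for any $c\in A$ with $c^x=d$,
$$[c,x+x^{-1}]=(c-d)(x+x^{-1}).$$
Analogous identities hold when $x$ is replaced by any order-$4$ element $y$ that fails to centralise a given involution. These form the computational core.

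The main argument then takes an arbitrary $y\in G$ and shows, via the Lie metabelian identity $[[u,v],[u',v']]=0$ applied to well-chosen $u,v,u',v'\in X^+$, that $y$ must lie in $A\cup Ax$. If $(a,y)\neq 1$, Lemma~\ref{AAbelConsecuencias} supplies $y^4=1$, $y^2\in A$, $(x,y)\in A$, and the relation $a\,a^x\,a^y\,a^{xy}=1$; pairing these with the expansion of $[[a,x+x^{-1}],[a,y+y^{-1}]]=0$ produces a finite list of possibilities for $a^y$, and each is eliminated using $a^2=b^2=1$ and $(a,x)\neq 1$, except one which forces $y\in Ax$. If $(a,y)=1$, the bracket $[a,y+y^{-1}]$ vanishes, so I would instead use a mixed double bracket such as $[[a,x+x^{-1}],[y+y^{-1},x+x^{-1}]]=0$; in the further sub-case where $y$ centralises $b$ as well (hence all of $A$), a bracket involving $a+b$ or the central element $ab$ is needed. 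In every case the conclusion is that $y\in A\cup Ax$. Once this is established for all $y\in G$, we have $G=A\cup Ax$, so $A$ is an elementary abelian subgroup of index $2$ in $G$. By Theorem~\ref{CasiAntisimetricos} this forces $\breve{G}$ to be commutative, contradicting the standing hypothesis.

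The main obstacle I expect is the sub-case $(a,y)=1$ with $y\notin A\cup Ax$. There the most natural Lie brackets vanish identically, often because of $(x+x^{-1})(x-x^{-1})=0$ and the fact that $A$ is abelian, so one must choose the auxiliary elements of the double bracket delicately to ensure that after all $(a,b,x)$-commutation relations are applied, the residual terms really constrain the position of $y$ rather than collapse to a tautology. Organising the resulting bracket choices and case splits without redundancy is the technical heart of the proof.
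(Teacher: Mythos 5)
Your overall architecture coincides with the paper's: argue by contradiction from $(a,x)\ne 1$, use Lemma~\ref{PropiedadesA} and Lemma~\ref{AAbelConsecuencias} to get $x^2,y^2,(x,y)\in A$ and $aa^xa^ya^{xy}=1$, feed this into the double bracket $[[a,x+x^{-1}],[a,y+y^{-1}]]=0$, and close by noting that $[G:A]=2$ would force $\breve{G}$ commutative via Theorem~\ref{CasiAntisimetricos}. However, the step you treat as routine is where essentially all of the work lies, and as described it does not go through. Writing $t=(x,y)$, the double bracket factors as
$$[[a,x+x^{-1}],[a,y+y^{-1}]]=(aa^x+aa^y+aa^{xy}t+t-aa^{xy}-1-aa^yt-aa^xt)(1+x^2+y^2+x^2y^2)xy,$$
so its vanishing does not yield ``a finite list of possibilities for $a^y$'' disposable with $a^2=b^2=1$ and $(a,x)\ne 1$ alone; it only says that one of $aa^x$, $aa^y$, $t$, $aa^{xy}t$ lies in $\GEN{x^2,y^2}$. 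After discarding the seven options incompatible with $aa^x,aa^y,t,(ax)^2,(ay)^2,(axy)^2,(xy)^2\ne 1$, nine genuinely possible relations survive (e.g.\ $t=x^2$, $aa^x=y^2$, $aa^x=x^2y^2$, $aa^{xy}t=1$, and their primed analogues), none of which is refuted by your listed identities and none of which ``forces $y\in Ax$''. Eliminating them requires a further battery of double commutators --- $[[a,x+x^{-1}],[a,ay+(ay)^{-1}]]$, $[[a,x+x^{-1}],[a,xy+(xy)^{-1}]]$, $[[a,(ax)+(ax)^{-1}],[a,(ay)+(ay)^{-1}]]$, among others --- each spawning its own sublist of cases. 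That elimination is the actual content of the lemma and is absent from your plan. Note also that when $(x,y)=1$ the displayed factor vanishes identically, so one must first replace $y$ by $ay$ to arrange $t\ne 1$; your sketch does not secure this.

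Separately, the sub-case you flag as ``the main obstacle'', namely $(a,y)=1$, is a non-issue: since $(a,x)\ne 1$, the element $xy$ satisfies $(a,xy)\ne 1$, and $xy\in\GEN{A,x}$ if and only if $y\in\GEN{A,x}$, so this case reduces immediately to the one already treated; no brackets involving $a+b$ or $ab$ are needed. In short, the skeleton is right and matches the paper, but the heart of the proof --- the nine-case elimination --- is missing, and the claimed one-line disposal of it is incorrect.
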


\begin{proof}Recall that we are assuming that $\breve{G}$ is not commutative and $A$ is not contained in the center of $G$. Let $x\in G$ and $a\in A$ be such that $(x,a)\ne 1$.
By Lemma~\ref{Aabelian}, $A$ is an elementary abelian 2-group. We claim that $G/A$ is elementary abelian 2-group too.
Indeed, if $y\in G$ then either $y$ or $yx$ does not commute with $a$. Thus $\GEN{A,x,y}$ with $x$ and $y$ satisfying the conditions of Lemma~\ref{AAbelConsecuencias}. Then $x^2,y^2,(x,y)\in A$ and therefore $\GEN{A,x,y}/A$ is elementary abelian. Thus $y^2\in A$. This proves the claim. 

By Theorem~\ref{CasiAntisimetricos}, $[G:A]>2$, for otherwise $\breve{G}$ is commutative. Hence $G\ne \GEN{A,x}$. Let $y\in G\setminus \GEN{A,x}$. By replacing $y$ by $xy$ if needed one may assume that $(a,y)\ne 1$. By replacing $y$ by $ay$ we may also assume that $t=(x,y)\ne 1$.
By Lemma~\ref{AAbelConsecuencias}, we deduce that $x$ and $y$ have order 4, $(x^2,y)=(x,y^2)=1=aa^xa^ya^{xy}$ and $t$ has order 2. Using this we deduce that $(t,x)=(t,y)=1$ and $(y+y^{-1})(x+x^{-1})=t(x+x^{-1})(y+y^{-1})$.

Thus
\begin{eqnarray}\label{axay}
\nonumber &&[[a,x+x^{-1}],[a,y+y^{-1}]] \\
 &&\hspace{3cm} = (aa^x+aa^y+aa^{xy}t+t-aa^{xy}-1-aa^yt-aa^xt)(1+x^2+y^2+x^2y^2)xy
\end{eqnarray}

By assumption this is 0 and therefore one of the following conditions hold:
$$aa^x\in \GEN{x^2,y^2}, \; aa^{y}\in \GEN{x^2,y^2}, \; t\in \GEN{x^2,y^2}, \; aa^{xy}t\in \GEN{x^2,y^2}.$$
This implies that one out of 16 equalities holds. However seven of them yields a contradiction with the fact that $aa^x$, $aa^y$, $(ax)^2$, $(ay)^2$, $(axy)^2$, $(xy)^2$ and $t$ are all different from 1. We classify the remaining nine equalities as follows:
$$\matriz{{lrclllrcl}
	\text{(a)} & aa^{xy}t &=&1. \\
	\text{(b)} & t &=&x^2.        & & \text{(b')} & t&=&y^2. \\
	\text{(c)} & aa^x&=&y^2.     & & \text{(c')} & aa^y&=&x^2. \\
	\text{(d)} & aa^x &=& x^2y^2.& & \text{(d')} & aa^y&=&x^2y^2.\\
	\text{(e)} & aa^{xy}t &=&x^2. & & \text{(e')} & aa^{xy}t&=&y^2. \\

}$$
By symmetry we only have to consider the five cases in the left column. On the other hand one can pass from case (e) to case (b) by replacing $x$ by $x_1=ax$.
Indeed, if case (e) holds then $(x_1,y)= a^x a^{xy}t = aa^xx^2 = x_1^2\ne 1$ . Therefore, we only have to consider cases (a)-(d).

Replacing in (\ref{axay}), $y$ by $ay$ and therefore $t$ by $(x,ay)=aa^xt$, and $y^2$ by $(ay)^2 = aa^y y^2$ we obtain
\begin{eqnarray}\label{axaay}
\nonumber &&[[a,x+x^{-1}],[a,ay+(ay)^{-1}]] \\
&&\hspace{4cm} = (aa^x+aa^y-aa^{xy}-1)(1+t)(1+x^2+aa^yy^2+aa^yx^2y^2)xay
\end{eqnarray}

In cases (a) or (b), equation (\ref{axaay}) takes the form
$$[[a,x+x^{-1}],[a,ay+(ay)^{-1}]] = 2(aa^x+aa^y-aa^{xy}-1)(1+x^2+aa^yy^2+aa^yx^2y^2)xay$$
Thus, in these cases $\GEN{x^2,aa^yy^2}$ contains either $aa^x$ or $aa^y$.
 This yields eight possible equalities, but again we can exclude five of them because neither $y^2$, $aa^x$, $aa^y$ nor $(ax)^2$ is 1. Thus
\begin{equation}\label{abc3}
 \text{In cases (a) or (b) either } x^2=aa^y,  y^2=a^xa^y, x^2y^2=a^xa^y \text{ or } x^2=y^2.
\end{equation}

On the other hand, in cases (c) and (d) equation (\ref{axaay}) takes the form
\begin{equation}\label{axaaycd}
[[a,x+x^{-1}],[a,(ay)+(ay)^{-1}]]
= 2(aa^x-1)(1+t)(1+x^2+aa^{xy}+aa^{xy}x^2)xay.
\end{equation}
and this is 0 if and only if $\GEN{aa^{xy},x^2}$ contains either $aa^x$ or $aa^xt$.
However $aa^x\ne 1, a^xa^{xy}\neq 1$ and $aa^xx^2\neq1$ (for otherwise $a=a^x$, $a=a^y$ or $(ax)^2=1$). Hence
\begin{equation}\label{cd5}
 \text{In cases (c) and (d) either } aa^y=x^2, t=aa^x, t=aa^xx^2, t=aa^y \text{ or } t=aa^yx^2.
\end{equation}


Now we deal separately with the four cases (a)-(d).

{\bf Case (a)} Suppose $aa^{xy}t=1$. Then the last option of (\ref{abc3}) can be excluded because $(xy)^2=t=aa^{xy}$ and hence $(ayx)^2=1$, a contradiction. If $x^2y^2=a^xa^y$ then since $a^xa^y=aa^{xy}=t=x^2y^2(xy)^2$, it follows that $(xy)^2=1$, a contradiction.
If $x^2=aa^y$ or $y^2=a^xa^y$.
In the first case
$$[[a,x+x^{-1}],[a,xy+(xy)^{-1}]] =
4(y^2-1)(1+aa^y+aa^x y^2 + a^x a^y y^2)y = 0,$$
 and thus $y^2\in\{1, aa^y, aa^xy^2,a^xa^yy^2\}$ yielding in all cases to a contradiction.
Finally if or $y^2=a^xa^y=aa^{xy}$ then
$$
[[a,x+x^{-1}],[a,xy+(xy)^{-1}]] =
4(1-a^x a^y)(1+aa^y +aa^y x^2 + x^2)y^{-1}= 0,$$
and therefore $t=y^2=a^x a^y \in \{1,aa^y,aa^y x^2,x^2\}$, a contradiction because $t\ne 1$, $a\ne a^x$, $aa^xx^2 = (ax)^2 \ne 1$ and $x^2\ne y^2$.

{\bf Case (b)} Assume $t=x^2$, or equivalently $x^y=x^{-1}$.

We consider separately the four cases of (\ref{abc3}).
If $aa^y=x^2$ then
$$
[[a,x+x^{-1}],[a,axy+(axy)^{-1}]] =
4(a+a^y+a^yy^2+a^{xy}y^2-a^x -ay^2-aa^{x}- a^y y^2)y=0
$$
and thus $a\in \{a^x,ay^2, aa^x,a^yy^2\}$, a contradiction because $a\ne a^y,a^{xy}$, $y^2\ne 1$ and $1\ne (ay)^2 = aa^y y^2$.
If $x^2=y^2$  then
$$\matriz{{l}
[[a,ax+x^{-1}a],[a,ay+y^{-1}a]] =
4(1+aa^{xy}+aa^xt+a a^y-aa^x-aa^y-t-aa^{xy}t)xy=0 ,
}$$
 and therefore $1\in \{aa^x,aa^y,t, aa^{xy}t\}$, a contradiction because $aa^x,aa^y,t\ne 1$ and $1\ne (axy)^2 = aa^{xy}t$.
If $y^2 = aa^{xy}x^2$ then the same commutator
$$\matriz{{l}
[[a,x+x^{-1}],[a,xy+y^{-1}x^{-1}]] =
4(y^2+x^2y^2+aa^{x}+aa^xx^2-1-x^2-aa^y-aa^yx^2)y =0,
}$$
and hence $1\in \{y^2,x^2y^2,aa^{x}, aa^xx^2\}$. Notice that  since $y^2,x^2y^2,aa^x \ne 1$, it follows that $aa^xx^2=1$. Then $y^2=aa^{xy}x^2=aa^{xy}aa^x=aa^y$ and therefore $(ay)2^1$, a contradiction.

Finally if $y^2=a^xa^y=aa^{xy}=ay^{-1}x^{-1}axy=ayxayxy^2$, then $1=(ayx)^2$ a contradiction.

{\bf Case (c)} Suppose $aa^x=y^2$. In this case the third and fifth options of (\ref{cd5}) take the forms $t=y^2x^2$ and $t=aa^yx^2=a^xy^2a^yx^2=aa^{xy}x^2y^2$, respectively, which implies $(xy)^2=1$ and $(axy)^2=1$, respectively.
This is contradictory with our hypothesis on $x$ and $y$.
The second and fourth options take the forms $t=y^2$ and $t=aa^y=aa^{xy}y^2$ which are cases (b') and (e') respectively.
Since these cases have been excluded we are left with only one case: $aa^y=x^2$. Then
%
%
$$
[[a,(ax)+(ax)^{-1}],[a,(ay)+(ay)^{-1}]] = 
4 (1+aa^{xy}+aa^xt+aa^yt-aa^x-aa^y-t-aa^{xy}t)xy=0 
$$
and thus $1\in \{aa^x, aa^y, t, aa^{xy}t\}$. Recall that $aa^x,aa^y,t\ne 1$ therefore $1=aa^{xy}t$ a contradiction because $1 \ne (xy)^2 = tx^2y^2=aa^{xy}t$.

{\bf Case (d)} Finally suppose $aa^x=x^2y^2$.
Then the second and fourth option of (\ref{cd5}) take the forms $t=x^2y^2$ and $t=aa^{xy}x^2y^2$, respectively and this implies $(xy)^2=1$ and $(axy)^2=1$, respectively, again a contradiction. The third and fifth options take the form $t=y^2$ and $t=aa^{xy}y^2$ which are cases (b') and (e'), already excluded. Thus the only remaining case is $aa^y=x^2$, which is case (c'), already excluded. This finishes the proof of the proposition.
\end{proof}

\medskip
\noindent \textbf{4.2} \underline{Properties of $B$}
\medskip

We now address the properties of $B=\GEN{g\in G: \circ (g)\ne 4}$. For that we start with the following lemma.

\begin{lem}\label{Orden4NoConmutan}
Let $x,y\in G$ with $x^4\ne 1 \ne y^4$ and $(x,y)\ne 1$. Then
\begin{enumerate}
 \item\label{x4NoDihedral} $x^y\ne x^{-1}$ and $y^x\ne y^{-1}$.
 \item\label{x4x2y2} $x^2y^2\ne 1$ and $x^2\ne y^2$.
 \item\label{x4NoDihedraly^2} $(y^2)^x \ne y^{-2}$ and $(x^2)^y\ne x^{-2}$.
\end{enumerate}
\end{lem}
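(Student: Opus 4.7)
The plan is to prove each of the three parts by contradiction, in each case exhibiting a specific double Lie bracket among generators of $X^+$ that must vanish by the hypothesis that $RG^+$ is Lie metabelian, and then analyzing the resulting equality in $RG$ by matching group-element supports.

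For part~(\ref{x4NoDihedral}), assume by contradiction that $x^y = x^{-1}$. This relation immediately forces $y^2$ to centralize $x$ (since $y^{-2}xy^2 = y^{-1}x^{-1}y = x$) and gives $(xy)^2 = x(yx)y = x(x^{-1}y)y = y^2 \ne 1$, so that $(xy)^4 = y^4 \ne 1$ and hence $xy+(xy)^{-1}\in X^+$. A direct check shows $[x+x^{-1},y+y^{-1}] = 0$ and $[x+x^{-1}, xy+(xy)^{-1}] = 0$ under this relation, so these naive brackets are uninformative. I instead use the nonzero bracket
\[
[y+y^{-1},\, xy+(xy)^{-1}] = (y+y^{-1})^2 (x^{-1}-x),
\]
and form the outer bracket $[[y+y^{-1},\, xy+(xy)^{-1}],\,[u,v]]$ for a well-chosen pair $u,v\in X^+$ (such as $u=y+y^{-1}$, $v=xy^2+(xy^2)^{-1}$), expand via $yx=x^{-1}y$ and $y^2x=xy^2$, and set the result to zero. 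Matching positive and negative supports in the resulting identity yields only a short list of possible element coincidences, each of which forces one of $x^2=1$, $y^2=1$, $x^4=1$, $y^4=1$, or $(x,y)=1$, contradicting the hypothesis. The symmetric statement $y^x\ne y^{-1}$ then follows by swapping the roles of $x$ and $y$.

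For part~(\ref{x4x2y2}), assume first $x^2y^2=1$, equivalently $x^2=y^{-2}$. Then $x^2$ and $y^2$ generate a common cyclic subgroup, but $x^2$ need not centralize $y$. I compute a double bracket of the form $[[x+x^{-1},y+y^{-1}],[x+x^{-1},xy+(xy)^{-1}]]$ (or an analogous one), reduce each term to normal form using only the single relation $x^2=y^{-2}$, and observe that after cancellation the only way the sum can vanish is if one of $(x,y)=1$, $x^4=1$, or $y^4=1$ holds, contradicting the hypotheses. The case $x^2=y^2$ is handled in exactly the same way after replacing $y$ by $y^{-1}$ in the choice of bracket.

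For part~(\ref{x4NoDihedraly^2}), assume $(y^2)^x=y^{-2}$, i.e.\ $xy^2=y^{-2}x$. Since $y^4\ne 1$, both $y+y^{-1}$ and $y^2+y^{-2}$ lie in $X^+$. Note that $y^2$ may itself have order $4$, so we cannot reduce directly to part~(\ref{x4NoDihedral}) applied to the pair $(x,y^2)$. Instead I compute
\[
[[x+x^{-1},\, y+y^{-1}],\, [x+x^{-1},\, y^2+y^{-2}]] = 0,
\]
expand using only $xy^2=y^{-2}x$ (and its inverse $xy^{-2}=y^2x$), and analyze the vanishing. The constraint is rigid enough that each possible matching of positive and negative monomials forces one of $x^2=1$, $y^2=1$, $(x,y)=1$, or $x^y=x^{-1}$ (reducing to part~(\ref{x4NoDihedral})), each contradicting either the hypotheses of the lemma or the conclusion of (\ref{x4NoDihedral}).

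The principal obstacle is the combinatorial explosion inherent in expanding double Lie brackets of four generators: each generator is a two-term sum, so each expansion produces up to sixteen monomials per side, and a single group-theoretic relation cannot be expected to collapse them completely. The delicate work is in selecting brackets whose expansions leave only a handful of possible element identifications and in showing systematically that each such identification is incompatible with $x^4\ne 1$, $y^4\ne 1$, and $(x,y)\ne 1$.
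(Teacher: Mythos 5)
Your treatment of parts (1) and (2) follows essentially the paper's route. For (1) the paper uses exactly the double bracket $[[y+y^{-1},xy+(xy)^{-1}],[y+y^{-1},xy^2+(xy^2)^{-1}]]$ that you propose; for (2) it uses $[[x+x^{-1},y+y^{-1}],[x+x^{-1},x^{-1}y+(x^{-1}y)^{-1}]]$, after first noting (contrary to your remark) that $x^2=y^{-2}$ \emph{does} force $(x,y^2)=(x^2,y)=1$, which is used to simplify the expansion. Two caveats on (1): the support matching does not land directly on your list of forbidden relations --- it first yields $y^6=1$ or $x^2\in\{y^2,y^{-2},y^4,y^{-4}\}$, and one needs the further observation that $x^2\in\GEN{y}$ together with $(x^2)^y=x^{-2}$ forces $x^4=1$. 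Moreover, several coincidences can only be excluded using the fact that $(xy^i)^2\ne 1$ for all $i$ (a consequence of Lemma~\ref{ACentral}, since $xy^i$ cannot be central when $(x,y)\ne 1$); your sketch never invokes this, and without it the case analysis does not close.

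The genuine gap is in part (3). The relation $(y^2)^x=y^{-2}$ gives $xy^2=y^{-2}x$ and $xy^{-2}=y^2x$, hence $x^{\pm 1}(y^2+y^{-2})=(y^2+y^{-2})x^{\pm 1}$, so the inner bracket $[x+x^{-1},y^2+y^{-2}]$ is identically zero and your proposed double bracket collapses to $[\,\cdot\,,0]=0$: it yields no information. (This is precisely the phenomenon you yourself noticed for $[x+x^{-1},y+y^{-1}]$ under $x^y=x^{-1}$ in part (1), recurring one level down.) The paper avoids any computation here: set $y_1=xy^2$; then $(x,y_1)=(x,y^2)=y^4\ne 1$ and $y_1^2=x(y^2x)y^2=x(xy^{-2})y^2=x^2$, so $y_1^4=x^4\ne 1$ and the pair $x,y_1$ satisfies the hypotheses of the lemma while violating part (2) ($x^2=y_1^2$). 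You should replace your argument for (3) by this reduction, or else find a bracket whose inner commutators do not vanish under the assumed relation.
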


\begin{proof}
The assumption implies that $xy^i,y^ix,x^iy,yx^i\not\in \z(G)$ for every $i\in \Z$.
Hence $(xy^i)^2$, $(y^ix)^2$, $(x^iy)^2$ and $(yx^i)^2$ are all different from 1, by Lemma~\ref{ACentral}.

(\ref{x4NoDihedral}) By symmetry it is enough to show that $x^y\ne x^{-1}$. Otherwise
\begin{eqnarray}\label{eq}
0&=& [[y+y^{-1},xy+(xy)^{-1}],[y+y^{-1},xy^2+(xy^2)^{-1}]] \nonumber\\
&=&2(4y^{-1}+2x^{-2}y+2x^2y+6y^{-3}+3x^{-2}y^3+3x^2y^3+2y^{-5}+x^{-2}y^5+x^2y^5 \\
&& -4y-2x^{-2}y^{-1}-2x^2y^{-1}-6y^3-3x^{-2}y^{-3}-3x^2y^{-3}-2y^5-x^{-2}y^{-5}-x^2y^{-5})\nonumber
\end{eqnarray}

Then $y\in\{y^{-1}, x^{-2}y, x^2y, y^{-3}, x^{-2}y^3, x^2y^3, y^{-5}, x^{-2}y^5, x^2y^5\}$. Having in mind that $y^4\ne 1 \ne x^4$ we deduce that $y^6=1$ or $x^2\in \{y^2,y^{-2},y^4,y^{-4}\}$. If $y^6=1$, then introducing introducing this relation in (\ref{eq}) we get that $x^2\in \{y^2,y^{-2},y^4,y^{-4}\}$ or a contradiction with $y^4\neq 1\neq x^4$. Therefore $x^2=(x^2)^y = x^{-2}$, a contradiction.

(\ref{x4x2y2}) Observe that the inequalities $x^2y^2\ne 1$ and $x^2 \ne y^2$ transfer to each other by replacing $y$ by $y^{-1}$.
Thus it is enough to prove the first inequality. So assume $x^2y^2=1$. Then $(x,y^2)=(x^2,y)=1$ and
\begin{equation}\label{corch1}
\begin{array}{l}
[[x+x^{-1},y+y^{-1}],[x+x^{-1},x^{-1}y+(x^{-1}y)^{-1}]] = \\
2(4x^{-1}+6x^{-1}y^2+2y^{-1}xy+2xyx^{-1}y^{-1}x+y^{-1}xy^{-1}x^2+xy^{-1}xy^{-1}x\\
+3xyxy^{-1}x+3yxy^{-1}x^2+2x^{-3}y^2 \\
-4x-6y^{-2}x-2yx^{-1}y^{-1}-3yx^{-1}y-yx^{-3}y-2xy^{-1}x^{-1}yx^{-1}\\
-3x^{-1}yx^{-1}yx-x^{-1}yx^{-1}yx^{-1}-2y^{-1}x^2y^{-1}x)=0
\end{array}
\end{equation}

Then $x\in\{x^{-1}, x^{-1}y^2, y^{-1}xy, xyx^{-1}y^{-1}x, y^{-1}xy^{-1}x^2, xy^{-1}xy^{-1}x, xyxy^{-1}x, yxy^{-1}x^2, x^{-3}y^2\}$
Having in mind that $x^4\ne 1$, $y^2=x^{-2}\ne x^2$, $(x,y)\ne 1$, $(xy^{-1})^2\ne 1$ and $x^y\ne x^{-1}$, it follows that the only possibility is $x=x^{-3}y^2$.
 However $x$ has coefficient $-4$ while $x^3y^2$ has coefficient $2$, thus expression (\ref{corch1}) is non-zero, yielding to a contradiction.

(\ref{x4NoDihedraly^2}) By symmetry it is enough to prove that $(y^2)^x \neq y^{-2}$. Assume that $(y^2)^x = y^{-2}$. Therefore $(x,y^2)\neq 1$. Let $y_1=xy^2$. Then $(x,y_1)\ne 1$ and if $(y^2)^x=y^{-2}$ it follows that $y_1^2=x^2$. Then $y_1^4\ne 1$ and this contradicts (\ref{x4x2y2}), when applied to $x$ and $y_1$.
%
\end{proof}

\begin{lem}\label{x4No1Conmutan}
$B$ is abelian.
\end{lem}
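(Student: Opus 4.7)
Since $A\subseteq \z(G)$ by Lemma~\ref{ACentral}, every generator of $B$ of order $1$ or $2$ is already central, so to prove that $B$ is abelian it suffices to show that any two elements $x,y\in G$ with $x^4\ne 1\ne y^4$ commute. My plan is to argue by contradiction: assume $x,y$ are such that $x^4\ne 1\ne y^4$ and $(x,y)\ne 1$, and combine the restrictions produced by Lemma~\ref{Orden4NoConmutan} with a well-chosen Lie metabelian identity to derive a contradiction.

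Under that assumption, Lemma~\ref{Orden4NoConmutan} gives a substantial list of forbidden relations: $x^y\ne x^{-1}$, $y^x\ne y^{-1}$, $x^2\ne y^{\pm 2}$, $(x^2)^y\ne x^{-2}$, and $(y^2)^x\ne y^{-2}$. Moreover, Lemma~\ref{ACentral} implies that any non-central group element has square different from $1$, so for every element $g\in G$ of the form $x^iy^j$ (with $i,j$ such that $g$ is manifestly non-central) we have $g^2\ne 1$; in particular $(xy)^2\ne 1$, $(xy^{-1})^2\ne 1$, $(x^2y)^2\ne 1$, and similar squares are all nontrivial. These inequalities will eliminate almost every possible cancellation that arises in the next step.

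The computational heart of the argument is to expand a carefully chosen quadruple Lie bracket of generators of $RG^+$, such as
\[
0=[[x+x^{-1},y+y^{-1}],[x+x^{-1},xy+(xy)^{-1}]],
\]
or a variant obtained by replacing one of the two pairs by $y+y^{-1}$ and $xy+(xy)^{-1}$, or by $x^2+x^{-2}$ and $y+y^{-1}$. Since $R$ has characteristic zero and the expression vanishes, every monomial appearing in the positive part of the expansion must coincide with a monomial in the negative part. Each such coincidence translates into a group-theoretic relation of the form $x^y=x^{-1}$, $y^x=y^{-1}$, $x^2=y^{\pm 2}$, $(x^2)^y=x^{-2}$, $(y^2)^x=y^{-2}$, $(xy)^2=1$, $(xy^{-1})^2=1$, or directly $(x,y)=1$. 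Every such relation is explicitly excluded by the previous paragraph, yielding the desired contradiction.

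The principal obstacle is computational rather than conceptual: the commutator must be chosen so that the supports of its positive and negative halves involve only monomials whose possible pairwise equalities are exhausted by the list of excluded relations. Following the pattern of the preceding arguments in Lemmas~\ref{AAbelConsecuencias} and \ref{Orden4NoConmutan}, I expect that a single commutator may not cover all cases and that one or two auxiliary commutators (obtained, for instance, by replacing $y$ by $xy$, $y^{-1}$ or $y^2$) will be required to eliminate residual possibilities. Once the correct identity is in hand, the case analysis is purely mechanical.
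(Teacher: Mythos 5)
Your reduction to pairs $x,y$ with $x^4\ne 1\ne y^4$ is correct, and your overall strategy --- expand a double Lie bracket of symmetric generators and match each positive monomial against the negative part --- is exactly the paper's (it uses $[[x+x^{-1},y+y^{-1}],[xy+(xy)^{-1},y+y^{-1}]]$). The gap is at the point you explicitly wave at. It is simply not true that every coincidence of monomials translates into one of the relations on your excluded list. In the paper's expansion the element $y$ occurs with coefficient $-3$ in the negative part and can be cancelled by positive monomials corresponding to eighteen different group relations; the non-centrality observations and Lemma~\ref{Orden4NoConmutan} dispose of fewer than two thirds of them. The survivors include $(x,y^2)=1$, $x^2y^4=1$, $y^{x^{-1}}=y^3$, $y^x=y^3$, $xyx=y^3$, $xy^3x=y$ and $x^{y^2}=x^{-1}$, none of which is on your list and none of which is refutable from Lemmas~\ref{ACentral} and~\ref{Orden4NoConmutan} alone.

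Closing that gap is the real content of the paper's proof, and it needs two ideas your outline does not anticipate. First, an auxiliary claim that $(x,y^2)\ne 1$ and $(x^2,y)\ne 1$, proved by re-expanding the same commutator under the hypothesis $(x,y^2)=1$ and running a second, separate cancellation analysis; this removes the relations $(x,y^2)=1$ and $x^2y^4=1$. Second, a multiplicity argument: because $y$ appears with coefficient $-3$, at least three of the five remaining positive monomials must simultaneously equal $y$. No single one of the relations $y^{x^{-1}}=y^3$, $y^x=y^3$, $xyx=y^3$ can be refuted in isolation, but any two of them holding together force $(x^2,y)=1$, which the claim forbids; hence the last two relations $xy^3x=y$ and $x^{y^2}=x^{-1}$ must both hold, and together they give $y^x=y^{-1}$, contradicting Lemma~\ref{Orden4NoConmutan}. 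Your plan of adding ``one or two auxiliary commutators'' does not capture this: the contradiction comes not from extra identities but from the forced conjunction of several surviving relations, read off from the coefficient count. As written, the proposal is a reasonable plan rather than a proof.
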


\begin{proof}
By means of contradiction, let $x,y\in G$ with $x^4\ne 1 \ne y^4$ and $(x,y)\ne 1$.
Once more recall that we are assuming that $RG^+$ is Lie metabelian and $\breve{G}$ is not commutative.
In particular, $xy^i,y^ix,x^iy,yx^i\not\in \z(G)$ for every $i\in \Z$. Hence $(xy^i)^2,(y^ix)^2,(x^iy)^2,(yx^i)^2\ne 1$, by Lemma~\ref{ACentral}.
We consider the following equality where the right column should not be read for the moment.

\begin{equation}\label{xyxyy}
\matriz{{ll}
[[x+x^{-1},y+y^{-1}],[xy+(xy)^{-1},y+y^{-1}]] = &\\
y^{-3}                                                                           & (y^4=1 ) \\
+3y^{-1}+xy^{-2}x^{-1}y+yxy^{-2}x^{-1}                                             & (y^2=1 ) \\
+2x^{-2}y+yx^{-2}                                                                 & (x^2=1 ) \\
+xyx^{-1}+x^{-1}y^{-1}xy^2+y^{-1}xyx^{-1}y+yx^{-1}y^{-1}xy+yxyx^{-1}y^{-1}        & ((x,y)=1 ) \\
+xyxy^2+2yxyxy                                                                    & ((xy)^2=1 ) \\
+2xy^{-1}x+y^{-1}xy^{-1}xy                                                        & ((xy^{-1})^2=1 ) \\
+x^{-1}y^{-2}x^{-1}y^{-1}+2y^{-1}x^{-1}y^{-2}x^{-1}                               & ((xy^2)^2=1 ) \\
+xy^{-1}xy^2+2y^{-1}xyxy+yxy^{-1}xy+3xyx+yxyxy^{-1}                               & (x^y=x^{-1} ) \\
+x^{-1}yxy^2+yx^{-1}yxy+yxyx^{-1}y                                                   & (y^x=y^{-1} ) \\
+2x^{-2}y^{-1}+3y^{-1}x^{-2}+x^{-1}y^{-2}x^{-1}y+yx^{-1}y^{-2}x^{-1}+y^{-2}x^{-2}y   & (x^2y^2=1 ) \\
+xy^{-2}x^{-1}y^{-1}+y^{-1}x^{-1}y^{-2}x+y^{-1}xy^{-2}x^{-1}                         & ((y^2)^x=y^{-2} ) \\
+y^{-1}x^{-1}y^2x                                                                    & ((x,y^2)=1 ) \\
+y^{-2}x^{-2}y^{-1}                                                                  & (x^2y^4=1) \\
+y^{-1}xyx^{-1}y^{-1}                                                                & (y^{x^{-1}}=y^3 ) \\
+xy^3x^{-1}                                                                          & (y^x=y^3 ) \\
+y^{-1}xyxy^{-1}                                                                     & (xyx=y^3) \\
+xy^3x                                                                               & (xy^3x=y ) \\
+y^{-1}x^{-1}y^2x^{-1}                                                               & (x^{y^2}=x^{-1}) \\
- 3y-3x^{-1}y^{-1}x^{-1}-2x^{-1}yx^{-1}-x^2y^{-1}-3x^2y-xy^{-1}x^{-1}\\
-2y^{-1}x^2-2yx^2-y^3-x^{-1}y^{-2}xy -x^{-1}y^{-3}x^{-1}-x^{-1}y^2xy\\
-xy^{-2}xy-xy^{-3}x^{-1}-xy^2x^{-1}y^{-1} -xy^2x^{-1}y -xy^2xy^{-1}\\
-2xy^2xy -2y^{-1}x^{-1}y^{-1}x^{-1}y^{-1}-2y^{-1}x^{-1}y^{-1}x^{-1}y\\
-y^{-1}x^{-1}y^{-1}xy^{-1} -y^{-1}x^{-1}yx^{-1}y^{-1}-y^{-1}x^{-1}yx^{-1}y\\
-y^{-1}x^{-1}yxy^{-1}-y^{-1}x^2y^2-y^{-1}xy^{-1}x^{-1}y^{-1}-y^{-1}xy^{-1}x^{-1}y\\
-y^{-1}xy^2x^{-1} -y^{-1}xy^2x -y^{-2}x^{-1}y^{-1}x^{-1}-y^{-2}x^{-1}y^{-1}x\\
-y^{-2}x^{-1}yx^{-1}-y^{-2}x^{-1}yx-yx^{-1}y^{-1}x^{-1}y^{-1}-yx^{-1}y^{-1}x^{-1}y \\
-yx^2y^2-yxy^{-1}x^{-1}y^{-1}-yxy^{-1}x^{-1}y -yxy^2x^{-1}-yxy^2x
}
\end{equation}
As we are assuming that $RG^+$ is Lie metabelian the expression in (\ref{xyxyy}) should be 0, hence as $y$ appears with coefficient $-3$, one of the elements with positive coefficient should be equal to $y$. Each relation in the right column is equivalent to the one given by each of the summands in the same line to be equal to $y$. Thus one of the relations in the right column of (\ref{xyxyy}) holds.
We will prove that each of these relation yields some contradiction. This is clear for the first seven relations by the first paragraph of the proof.
For the next five relations, it is a consequence of Lemma~\ref{Orden4NoConmutan}. Before continuing with the remaining relations we prove the following claim which will exclude the next two relations.

{\bf Claim}. $(x,y^2)\ne 1$ and $(x^2,y)\ne 1$. By symmetry it is enough to deduce a contradiction from the assumption $(x,y^2)=1$. In this case (\ref{xyxyy}) reduces to
\begin{equation}\label{xyxyyCond}
\matriz{{ll}
[[x+x^{-1},y+y^{-1}],[xy+(xy)^{-1},y+y^{-1}]] = &\\
4y^{-3}                                                                           & (y^4=1 ) \\
+4y^{-1}                                             & (y^2=1 ) \\
+2x^{-2}y+2yx^{-2}                                                                 & (x^2=1 ) \\
+xyx^{-1}+x^{-1}yx+y^{-1}xyx^{-1}y+yx^{-1}y^{-1}xy        & ((x,y)=1 ) \\
+2xy^3x+2yxyxy                                                                    & ((xy)^2=1 ) \\
+2xy^{-1}x+2y^{-1}xy^{-1}xy                                                        & ((xy^{-1})^2=1 ) \\
+2x^{-2}y^{-3}+2y^{-3}x^{-2}                               & ((xy^2)^2=1 ) \\
+4xyx+4y^{-1}xyxy                               & (x^y=x^{-1} ) \\
+x^{-1}y^3x+yx^{-1}yxy+yxyx^{-1}y                                                   & (y^x=y^{-1} ) \\
+4x^{-2}y^{-1}+4y^{-1}x^{-2}   & (x^2y^2=1 ) \\
+xy^3x^{-1}                                                                          & (y^x=y^3 ) \\
- 4y-4y^3-2x^2y^{-1}-4x^2y-2y^{-1}x^2-4yx^2 -2x^2y^3-2y^3x^2\\
-4x^{-1}y^{-1}x^{-1}-2x^{-1}yx^{-1}-xy^{-1}x^{-1} -x^{-1}y^{-1}x \\
- 2 x^{-1}y^{-3}x^{-1}-xy^{-3}x^{-1} -x^{-1}y^{-3}x \\
 -2y^{-1}x^{-1}y^{-1}x^{-1}y^{-1}-y^{-1}x^{-1}y^{-1}xy^{-1} -y^{-1}xy^{-1}x^{-1}y^{-1} \\
-4y^{-1}x^{-1}y^{-1}x^{-1}y -y^{-1}x^{-1}yxy^{-1}-y^{-1}xy^{-1}x^{-1}y\\
-2y^{-1}x^{-1}yx^{-1}y
}
\end{equation}
This is 0 and hence one of the conditions on the right column of (\ref{xyxyyCond}) holds.
The first seven relations are excluded by the first paragraph of the proof. The following three relations are excluded by Lemma \ref{Orden4NoConmutan}. Moreover, if $y^x=y^3$ then $y^2=(y^2)^x = y^6$, a contradiction that finishes the proof of the claim.

So only the last five positive summands of (\ref{xyxyy}) can cancel the $-3y$ and hence at least three of the following conditions hold:
$y^{x^{-1}}=y^3$, $y^x=y^3$, $xyx=y^3$, $xy^3x=y$ and $x^{y^2}=x^{-1}$.
Any two of the first three equalities cannot hold simultaneously because otherwise $(x^2,y)=1$, in contradiction with the Claim.
Then the last two equalities hold. Then $y=xy^3x=y^2x^{-1}yx$ and hence $y^x=y^{-1}$, in contradiction with Lemma~\ref{Orden4NoConmutan}.
\end{proof}

\medskip
\noindent\textbf{4.3} \underline{The exponent of $G$}
\medskip

We will consider separately the cases when $G$ has exponent 4 or different of 4.

\begin{lem}\label{abelianindex2}
If $\Exp(G)\ne 4$ then $[G:B]=2$ and for every $x\in G\setminus B$ and $b \in B$ we have $b^x=b^{-1}$.
\end{lem}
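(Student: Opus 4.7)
My plan is to prove $[G:B]=2$ first and then the inversion formula $b^x=b^{-1}$.

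For $[G:B]=2$: any $x\in G\setminus B$ must have order $4$ (else $x$ is itself a generator of $B$), so $x^2$ is an involution lying in $A\subseteq\z(G)\subseteq B$ by Lemma~\ref{ACentral}; thus $G/B$ is elementary abelian. Since $B$ is abelian by Lemma~\ref{x4No1Conmutan} but $RG^+$ is not commutative under the standing hypothesis, $G\ne B$, so $|G/B|\ge 2$. For $|G/B|\le 2$, I use $\Exp(G)\ne 4$ (combined with $\Exp(G)\ne 1,2$, both excluded since $\breve{G}$ is not commutative) to pick $b\in G$ with $\circ(b)\notin\{1,2,4\}$; then $b\in B$ and $b^4\ne 1$. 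For any $z\in G\setminus B$ one has $bz\notin B$ of order $4$, and combining $(bz)^2=bb^z z^2$ (valid because $B$ is abelian so $b^{z^{-1}}=b^z$) with $z^2\in\z(G)$ and $(bz)^4=z^4=1$ gives $b^z=b^{-1}a_z$ with $a_z\in A$. If $|G/B|\ge 4$, pick $x,y\in G\setminus B$ with $xy\notin B$; the cocycle $b^{xy}=(b^x)^y$ expanded using $A\subseteq\z(G)$ of exponent $2$ gives $b^2=a_xa_ya_{xy}\in A$, hence $b^4=1$, a contradiction.

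For $b^x=b^{-1}$: since $B$ is abelian and generated by elements of order $\ne 4$, it suffices to prove $g^x=g^{-1}$ for generators $g\in G$ of $B$. The cases $\circ(g)\in\{1,2\}$ are trivial as $g\in\z(G)$, so assume $\circ(g)\notin\{1,2,4\}$; the above argument gives $g^x=g^{-1}a$ with $a\in A\subseteq\z(G)$, and I must rule out $a\ne 1$. First, $a\ne x^2$ (else $(gx)^2=ax^2=1$ so $gx\in A\subseteq B$, contradicting $gx\notin B$). Using the relations $gx=xg^{-1}a$, $xg=g^{-1}ax$ (derived from $a$ central and $x^{-2}=x^2$), a direct calculation gives
\[
[g+g^{-1},\,gx+(gx)^{-1}]=(1-a)(1+g^2)(x-x^{-1})
\]
and
\[
[g+g^{-1},\,xg+(xg)^{-1}]=(a-1)(1+g^{-2})(x-x^{-1}).
\]
Taking their Lie bracket, using the interleavings $(x-x^{-1})(1+g^{\pm 2})=(1+g^{\mp 2})(x-x^{-1})$, $(x-x^{-1})^2=2(x^2-1)$ and the factorization $(1+g^2)^2-(1+g^{-2})^2=(g-g^{-1})(g+g^{-1})^3$, collapses everything to $-4(1-a)(x^2-1)(g-g^{-1})(g+g^{-1})^3$, which must vanish by Lie metabelianity. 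Since $a\ne 1,x^2$, the element $(1-a)(x^2-1)$ has support on the four distinct elements $\{1,a,x^2,ax^2\}$ of $A$; a short case analysis on $\circ(g)\in\{3,5,6,7,8,\ldots,\infty\}$ shows $(g-g^{-1})(g+g^{-1})^3$ is nonzero in $R\langle g\rangle$; and a character argument on the abelian group $\langle a,x^2,g\rangle$ then forces the product to be nonzero in $RG$ unless $a=1$. Hence $a=1$ and $g^x=g^{-1}$.

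The main technical obstacle is the commutator computation leading to the key identity above; although direct, it requires careful use of $a\in\z(G)$ and $x^4=1$ to produce the necessary cancellations. The case analysis for the nonvanishing of $(g-g^{-1})(g+g^{-1})^3$ is routine but must cover orders $3,6,8$ where some terms collapse into fewer distinct monomials.
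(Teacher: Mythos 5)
Your first half (the index computation) is correct and in fact takes a genuinely different route from the paper: the paper obtains $[G:B]=2$ as a corollary of the inversion property, whereas you prove it directly from the cocycle identity $b^{xy}=(b^x)^y$, getting $b^2=a_xa_ya_{xy}\in A$ and hence $b^4=1$. That part is fine (the correct justification for $b^{z^{-1}}=b^z$ is that $z^2\in A\subseteq \z(G)$, not that $B$ is abelian, but the conclusion stands). Your two commutator identities are also correct: using $gx=xg^{-1}a$ and $xg^{\pm2}x^{-1}=g^{\mp2}$ one indeed gets $[g+g^{-1},gx+(gx)^{-1}]=(1-a)(1+g^2)(x-x^{-1})$, its companion, and the collapse of their Lie bracket to $-4(1-a)(x^2-1)(g-g^{-1})(g+g^{-1})^3$.

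The gap is in the very last step: the vanishing of that product does not force $a=1$. A product of two nonzero elements of a commutative group algebra can vanish, and your character argument needs a \emph{single} character $\chi$ of $\GEN{a,x^2,g}$ with $\chi(a)=\chi(x^2)=-1$ and $\chi(g)^4\ne 1$ simultaneously; such a character need not exist. Concretely, take $\circ(g)=8$ and $a=x^2g^4$. This configuration is not excluded by anything you have established ($a$ is a central involution, $a\ne 1$, $a\ne x^2$, and $g^x=g^3x^2$ is compatible with $g^{x^2}=g$). Then $(1-a)(x^2-1)=-(1-a)(1+g^4)$ while $(g-g^{-1})(g+g^{-1})^3=2g^2(1-g^4)$, so their product is $-2(1-a)g^2(1-g^8)=0$ identically, and your final identity gives no information. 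So this one double commutator cannot rule out $g^x=g^{-1}a$ with $a\ne 1$ when $g$ has order $8$; additional identities are needed. The paper avoids this by first pinning down $x^2=b^4$ via $[[b+b^{-1},x+x^{-1}],[b+b^{-1},x^b+(x^{-1})^b]]$ (a case analysis that in particular forces $b^{10}=1$), and only then extracting the contradiction from $[[b+b^{-1},bx+(bx)^{-1}],[b+b^{-1},xb+(xb)^{-1}]]$; you would need to supplement your argument with commutators of this kind to close the order-$8$ case.
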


\begin{proof}
Recall that we are assuming that $RG^+$ is Lie metabelian and $\breve G$ is not commutative. Assume that $\Exp(G)\ne 4$.
First we will prove that if for every $x\in G\setminus B$ and $b \in B$ we have $b^x=b^{-1}$ then the index of $B$ in $G$ is equal to 2, or equivalently that $xy\in B$ for every $y\in G\setminus B$. Otherwise we take $b\in B$ with $b^2\ne 1$ then
$b^{-1}=b^{xy}=(b^x)^y=(b^{-1})^y=b$ a contradiction.

Therefore it remains to prove that for every $x\in G\setminus B$ and $b \in B$ we have $b^x=b^{-1}$. By means of contradiction assume that $b^x\neq b^{-1}$.
As $B$ is abelian, it is enough to prove the result for $b\in G$ with $b^4\ne 1$.
Note that $x^4=(bx)^4=(b^{-1}x)^4=1$. Therefore $x^2,(bx)^2\in \z(G)$, by Lemma~\ref{ACentral}, and $(xb)^2 = ((bx)^2)^x=(bx)^2=(bx)^{-2}$.
Hence $1=(bx)^2(xb)^2=bx^{-1}b^2xb$ and thus $(b^2)^x=b^{-2}$.
By assumption
$$\matriz{{lll}
0&=&[[b+b^{-1},x+x^{-1}],[b+b^{-1},x^b+(x^{-1})^b]]  \\
&&+8b^2+8b^{-1}xbx+8b^{-1}x^{-1}bx+8b^2x^2+4b^4+4b^4x^2+4b^{-1}xb^3x+4b^{-1}x^{-1}b^3x \\
&&-8b^{-2}-8bxb^{-1}x-8bx^{-1}b^{-1}x-8b^{-2}x^2-4b^{-4}-4b^{-4}x^2-4xb^{-3}xb-4xb^{-3}x^{-1}b
}$$
Having in mind that $b^4\ne 1$, $(b^{-1}x)^2\ne 1$ and $b^x\ne b^{-1}$, for the $b^2$ to be canceled by the summands with negative coefficient either $x^2=b^4$ or at least two of the following conditions holds: $b^6=1$, $b^6=x^2$, $b^x=b^{-3}x^2$ or $b^x=b^{-3}$. However  the first two equalities are not compatible and the last two are also not compatible. Therefore $b^6\in \{1,x^2\}$ and $b^x \in \{b^{-3},b^{-3}x^2\}$. Thus $b=b^{x^2}=b^{-9}$ and therefore $b^{10}=1$. Then $b^{-4}=b^6\in \{1,x^2\}$. Since $b^4\neq 1$, we conclude that $x^2=b^4$.
Then
$$[[b+b^{-1},bx+(bx)^{-1}],[b+b^{-1},xb+(xb)^{-1}]] =
16(b^{2}
+xb^{-1}xb
-b^6-bxb^{-1}x^{-1})
=0.$$
Thus $b^2=b^{6}$ or $b^{x}=b^{-1}$ yielding in both cases to a contradiction, that finishes the proof of the lemma.
\end{proof}

\begin{lem}\label{Exp4CentroA}
If $\Exp(G)=4$ then $\z(G)=A$.
\end{lem}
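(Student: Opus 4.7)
The plan is to derive a contradiction from the supposition that $\mathcal{Z}(G) \not\subseteq A$. Fix $z \in \mathcal{Z}(G) \setminus A$; then $z$ has order $4$ (as $\mathrm{Exp}(G)=4$) and $z^2\in A\setminus\{1\}$. Every $g\in G$ satisfies $g^2\in A$ (from $g^4=1$), so $G/A$ is an elementary abelian $2$-group. Moreover $[G:A]\ge 4$: otherwise $A$ itself would be an elementary abelian subgroup of index $\le 2$, putting $G$ in case~(2) of Theorem~\ref{CasiAntisimetricos} and forcing $\breve{G}$ commutative, contrary to our standing hypothesis.

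If $[G:A]=4$, then $G/\mathcal{Z}(G)$ is a quotient of the Klein four-group $G/A$; non-abelianness of $G$ (coming from non-commutativity of $\breve{G}$) rules out cyclic quotients, hence $G/\mathcal{Z}(G)\cong(\mathbb{Z}/2)^2$, so $\mathcal{Z}(G)=A$, contradicting $z\notin A$. So assume $[G:A]\ge 8$. The subgroup $K=\langle g:g^2\ne 1\rangle$ is non-abelian (else Theorem~\ref{CasiAntisimetricos}(1) applies), so pick $x,y\in K$ of order $4$ with $t:=(x,y)\ne 1$; then $t\in G'\subseteq A$ is a central involution and all of $(x^{\pm1},y^{\pm1})$ equal $t$. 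A direct computation (using $yx^{\pm1}=x^{\pm1}yt$, $y^{-1}x^{\pm1}=x^{\pm1}y^{-1}t$) yields
\[
[x+x^{-1},y+y^{-1}]=(1-t)(x+x^{-1})(y+y^{-1}),
\]
\[
[zx+(zx)^{-1},zy+(zy)^{-1}]=(1-t)(z^2W+W'),
\]
with $W=xy+x^{-1}y^{-1}$, $W'=xy^{-1}+x^{-1}y$, and analogous formulas hold after replacing $y$ by any other order-$4$ element $y'$ with $(x,y')=t$.

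Using $[G:A]\ge 8$, choose $y'\in G$ of order $4$ with $y'A\notin\langle xA,yA\rangle$ and (by adjusting within $y'A$, allowed since $A\subseteq\mathcal{Z}(G)$) with $(x,y')=(y,y')=t$. Applying the Lie-metabelian hypothesis both to $[[x+x^{-1},y+y^{-1}],[x+x^{-1},y'+y'^{-1}]]=0$ and to the analogous $z$-twisted commutator $[[zx+(zx)^{-1},zy+(zy)^{-1}],[zx+(zx)^{-1},zy'+(zy')^{-1}]]=0$, and subtracting (using $z^4=1$ and $(1-t)^3=4(1-t)$, valid in characteristic zero), one obtains
\[
(1-t)(z^2-1)\Bigl[(x^2+x^{-2})(yy'^{-1}+y^{-1}y')+2(yy'+y^{-1}y'^{-1})\Bigr]=0.
\]
Observing that $(1-t)(z^2-1)=-4\,e_{-1,-1}$ with $e_{-1,-1}=\tfrac14(1-t)(1-z^2)$ a non-zero central idempotent in $R\langle t,z^2\rangle$, and factoring the bracketed factor as $2(yy')\mu$ with $\mu=x^2y'^2+x^2y^2+1+y^2y'^2\in\mathbb{Z}A$ (using $x^2=x^{-2}$, $y^{-2}=y^2$, $y'^{-2}=y'^2$ in the elementary abelian $2$-group $A$), the identity is equivalent to $e_{-1,-1}\mu=0$.

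The proof concludes by showing $y'$ may be chosen so that $e_{-1,-1}\mu\ne 0$. A short character computation in $R\langle t,z^2\rangle$ shows that, when $\chi(x^2)=\chi(y^2)=\chi(y'^2)$ for the character $\chi=\chi_{(-1,-1)}$ of $\langle t,z^2\rangle$, the relevant coefficient $\chi(x^2y'^2)+\chi(x^2y^2)+1+\chi(y^2y'^2)$ equals $\pm 4$, forcing $e_{-1,-1}\mu\ne 0$ and contradicting the displayed identity. The \textbf{main obstacle} is arranging this character-coincidence by the choice of $y'$: the freedom afforded by $[G:A]\ge 8$ and by the non-abelianness of $K$ makes it possible in the generic case (and automatic in the $Q_8\times\mathbb{Z}/4$-like archetype where $x^2=y^2=y'^2=t$), but when the squares of the order-$4$ elements are ``spread out'' over $A/\langle t,z^2\rangle$, one must replace $y'$ (or, in a boundary sub-case, resort to an auxiliary fourth generator afforded by $[G:A]\ge 8$) and apply a variant of the same commutator identity. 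In every sub-case the underlying pattern is the same: the $z$-twisted Lie-metabelian identity, compared with its untwisted counterpart, forces a central-isotypic vanishing that fails against the explicit support of $\mu$, yielding the contradiction and so forcing $\mathcal{Z}(G)=A$.
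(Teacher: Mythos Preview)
Your plan has a genuine gap at the step where you choose $y'$. You write that, after picking $y'$ with $y'A\notin\langle xA,yA\rangle$, you may ``adjust within $y'A$'' to arrange $(x,y')=(y,y')=t$. But $A\subseteq\z(G)$, so multiplying $y'$ by an element of $A$ leaves every commutator $(\,\cdot\,,y')$ unchanged; the adjustment accomplishes nothing. Even if you allow replacing $y'$ by $xy'$, $yy'$, or $xyy'$, a short computation shows the resulting pairs $((x,\cdot),(y,\cdot))$ are $((x,y'),(y,y'))$, $((x,y'),t(y,y'))$, $(t(x,y'),(y,y'))$, $(t(x,y'),t(y,y'))$, so both coordinates equal $t$ only if $(x,y')\in\{1,t\}$ and $(y,y')\in\{1,t\}$. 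There is no reason this holds: since $G$ is class~$2$ with $G'\subseteq A$ and $A$ central, the commutator map $G/A\times G/A\to A$ is an arbitrary alternating bilinear form, and $(x,y')$ can land on any element of $G'$, not just $1$ or $t$. Your closing paragraph acknowledges this (``the main obstacle is arranging this character-coincidence'') and gestures at further sub-cases and an auxiliary fourth generator, but none of this is actually carried out; as written, the argument is a heuristic, not a proof.

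By contrast, the paper's proof avoids any third generator entirely. It starts from $x,y$ with $[x+x^{-1},y+y^{-1}]\ne 0$ (a stronger condition than $(x,y)\ne 1$: it also gives $x^y\ne x^{-1}$, $y^x\ne y^{-1}$, hence $t\notin\{x^2,y^2\}$), and then evaluates three concrete double Lie brackets built from $x$, $y$ and the offending central element $z$ itself, e.g.\ $[[x+x^{-1},y+y^{-1}],[x+x^{-1},xyz+(xyz)^{-1}]]$. The first of these forces $z^2\in\{x^2,y^2,x^2y^2\}$, and each of the resulting cases is killed by one further explicit bracket. This sidesteps all the delicate existence questions in your plan: no hypothesis on $[G:A]$ beyond $\z(G)\supseteq A$ is needed, no third element has to be manufactured, and no character bookkeeping is required.
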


\begin{proof}
By Lemma~\ref{ACentral}, $A\subseteq \z(G)$ and we have to prove that the equality holds. By means of contradiction assume that $z\in \z(G)\setminus A$. As $R G^+$ is not commutative and the elements of $G$ of order $2$ are central, there are $x,y \in G$ such that $[x+x^{-1},y+y^{-1}]\ne 0$.
In particular, $t=(x,y)\ne 1$, $x^y\ne x^{-1}$ and $y^x\ne y^{-1}$ and hence $t\not\in \{x^2,y^2\}$.
As $G/A$ has exponent 2, we have $t\in A$ and, in particular $t$ has order 2.
Moreover $z,x,y,xy\not\in A$ and therefore they all have order 4.
Then
$$\matriz{{lll}
0&=&[[x+x^{-1},y+y^{-1}],[x+x^{-1},xyz+(xyz)^{-1}]] \\
&&
8(xz+x^3z+xy^2z+txz^3+x^3y^2z+tx^3z^3+txy^2z^3+tx^3y^2z^3 \\
&&-txz-xz^3-tx^3z-txy^2z-x^3z^3-xy^2z^3-tx^3y^2z-x^3y^2z^3)
}$$
Comparing $xz$ with the terms with negative coefficient, and having in mind that $x,y,xy$ and $z$ have order 4
and $t\not\in \{x^2,y^2\}$, we have that  $z^2\in \{x^2, y^2, x^2y^2\}$.
By symmetry it is enough to consider the cases $x^2=z^2$ and $x^2y^2=z^2$.

Case 1. Assume that $z^2=x^2$. Then
$$\matriz{{lll}
0&=&[[y+y^{-1},xy+(xy)^{-1}],[y+y^{-1},xz]]
=8(t-1)(1+y^2+tx^2+tx^2y^2)yz
}$$
and thus $t\in \GEN{y^2,tx^2}$. However $t\not\in \{1,y^2,tx^2\}$ and therefore $x^2=y^2=z^2$.
Then
$$\matriz{{rcl}
[[xz,yz],[xz,xyz+(xyz)^{-1}]]
=4(1+tx^2-t-x^2)x=0}$$
and hence either $t= 1$ or $x^2=1$, a contradiction.

Case 2. Assume that $z^2=x^2y^2$. Then
$$
[[x+x^{-1},xy+(xy)^{-1}],[x+x^{-1},yz+(yz)^{-1}]]
=16(1+x^2+ty^2+tz^2-t-y^2-tx^2-z^2)xz= 0,
$$
and hence $1\in\{t, y^2, tx^2, z^2\}$ yielding in all cases to a contradiction.

\end{proof}

\begin{lem}\label{Babelianindex2}
Assume $G$ has exponent 4 and let $x,y,h\in G\setminus A$ with $\GEN{x,y,h}$ non-abelian, $x^2\neq y^2$ and $(x,y)=1$.
Then $x^h=x^{-1}$ and $y^h=y^{-1}$.
\end{lem}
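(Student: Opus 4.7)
The plan is to set $a_1 = (x,h)$ and $a_2 = (y,h)$. As shown in the proof of Lemma~\ref{ACentral}, $G/A$ is elementary abelian of exponent $2$, so $a_1, a_2 \in A \subseteq \z(G)$ with $a_i^2 = 1$. The conclusion $x^h = x^{-1}$ and $y^h = y^{-1}$ is equivalent to $a_1 = x^2$ and $a_2 = y^2$, so the goal reduces to pinning down these two central commutators.

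The main technical ingredient is the identity: if $g, f \in G$ with $(g,f) = c \in \z(G)$ of order dividing $2$, then
\[
[g+g^{-1}, f+f^{-1}] = (1-c)(g+g^{-1})(f+f^{-1}),
\]
proved by expanding both sides using $fg = gfc$. This identity applies to all brackets we need, since every commutator in $G$ lies in $A$ and $A$ is an elementary abelian $2$-group. Using it, I would compute the Lie-metabelian relation
\[
[[x+x^{-1}, h+h^{-1}],\ [x+x^{-1}, hy+(hy)^{-1}]] = 0.
\]
After computing $(x, hy) = a_1$, $(h, hy) = a_2$, $(h+h^{-1})(hy+(hy)^{-1}) = (1+h^2)(y + a_2 y^{-1})$, and $(x+x^{-1})^2 = 2(1+x^2)$, the left side reduces to a scalar multiple of $(1-a_1)(1-a_2)(1+x^2)(1+h^2)(1+a_2 y^2)\cdot y$. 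Since $y$ is a unit and $R$ has characteristic zero, we obtain
\[
(1-a_1)(1-a_2)(1+x^2)(1+h^2)(1+a_2 y^2) = 0 \quad \text{in } RA.
\]
The analogous computation swapping $x$ and $y$ gives $(1-a_1)(1-a_2)(1+y^2)(1+h^2)(1+a_1 x^2) = 0$. Each equation lives in the commutative ring $RA$ and, via characters $\chi : A \to \{\pm 1\}$ of the elementary abelian $2$-group $A$, becomes a disjunction of five vanishing conditions on $\chi(a_1), \chi(a_2), \chi(x^2), \chi(y^2), \chi(h^2)$.

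The main obstacle is the degenerate sub-case $a_1 = 1$ (symmetrically $a_2 = 1$), in which the factor $(1-a_1)$ annihilates the above equations, rendering them vacuous. If $a_1 = 1$ and $a_2 \neq 1$, then $h$ commutes with $x$, and a short argument rules out $h^2 = x^2$: otherwise $hx^{-1} \in A$, so $h \in xA$, forcing $(y, h) = (y, x) = 1$ and contradicting $a_2 \neq 1$. With $h^2 \neq x^2$ established, I would invoke a fresh commutator such as
\[
[[xh+(xh)^{-1},\, y+y^{-1}],\ [xh+(xh)^{-1},\, yh+(yh)^{-1}]] = 0,
\]
which, using $(xh)^2 = x^2 h^2$, $(xh, y) = (xh, yh) = a_2$, and $(y+y^{-1})(yh+(yh)^{-1}) = (1+y^2)(h+a_2 h^{-1})$, reduces to $(1-a_2)(1+x^2 h^2)(1+y^2)(1+a_2 h^2) = 0$ in $RA$, a non-trivial constraint because $x^2 h^2 \neq 1$. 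A careful character analysis combining all three equations, together with $x^2 \neq y^2$ and $h^2 \neq x^2$, contradicts the assumption $a_1 = 1, a_2 \neq 1$. The symmetric argument rules out $a_2 = 1, a_1 \neq 1$, and the remaining equations then force $a_1 = x^2$ and $a_2 = y^2$.
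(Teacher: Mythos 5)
Your algebraic framework is sound and the computations you display do check out: in this setting every commutator $(g,f)$ lies in $A\subseteq\z(G)$ and has order dividing $2$, the identity $[g+g^{-1},f+f^{-1}]=(1-c)(g+g^{-1})(f+f^{-1})$ is exactly the paper's equation (\ref{Suficiency32}), and your first double commutator does reduce to $(1-a_1)(1-a_2)(1+x^2)(1+h^2)(1+a_2y^2)=0$ in $R\GEN{a_1,a_2,x^2,y^2,h^2}$. Reformulating ``compare supports'' as vanishing of products of $(1\pm e)$ factors, read off through characters of an elementary abelian $2$-group, is a legitimate and arguably cleaner packaging of what the paper does. (Minor remark: the degenerate case $a_1=1$ you worry about cannot occur at all, since applying Lemma~\ref{Exp4CentroA} to $H=\GEN{x,y,h}$ gives $\z(H)=\{g\in H:g^2=1\}$, and $x$ commutes with $y$ and with itself, so $(x,h)=1$ would force $x\in\z(H)$, contradicting $x^2\ne 1$.)

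The gap is the final step, which you assert rather than prove: \emph{``the remaining equations then force $a_1=x^2$ and $a_2=y^2$.''} They do not. Each of your equations unpacks (via the character criterion) into a large disjunction of multiplicative relations among $a_1,a_2,x^2,y^2,h^2$, and the configuration $a_1=(x,h)=h^2$ (equivalently $h^x=h^{-1}$) satisfies \emph{both} of your equations: it realizes the factor-cancellation $a_1\cdot h^2=1$ hidden in $(1+x^2)(1+h^2)$ versus $(1-a_1)$ in Eq.~1, and the term $a_1=h^2$ appears again in the disjunction coming from Eq.~2, so neither equation is violated regardless of what $a_2$ is. Ruling out $h^x=h^{-1}$ (and $h^y=h^{-1}$) is precisely the content of the paper's Claim 1, which requires several \emph{additional} double commutators and a further case split on $h^2\in\{y^2,x^2y^2\}$; the paper then still needs Claims 2--4 and one more identity to finish. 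So your three identities are provably insufficient, and the ``careful character analysis'' you defer is where essentially all of the work of this lemma lives. To repair the argument you would at minimum need to import the preliminary inequalities $(xh)^2\ne1$, $(yh)^2\ne1$, $(x^{\pm1}y^{\pm1}h)^2\ne1$, $a_1\ne a_2$ (all consequences of Lemma~\ref{ACentral} and Lemma~\ref{Exp4CentroA} applied to $H$) and then supply enough further commutator identities to exclude, case by case, every surviving branch of the disjunctions --- in particular $a_1=h^2$, $a_2=h^2$, $a_1=a_2y^2$, and their symmetric counterparts.
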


\begin{proof}
Let $H=\GEN{x,y,h}$.
If $(xh)^2=1$ then $xh\in \z(G)$, by Lemma~\ref{ACentral}, and thus $(x,h)=1=(y,h)$.
Then $H$ is abelian in contradiction with the hypothesis.
Thus $(xh)^2\ne 1$ and similarly $(yh)^2\neq 1$ and $(x^{\pm 1} y^{\pm 1} h)^2\neq 1$.
As $H$ is not abelian either $(x,h)\ne 1$ or $(y,h)\ne 1$ and by symmetry one may assume that $(x,h)\ne 1$.
Then
$[x-x^{-1},h-h^{-1}]=xh+x^{-1}h^{-1}+hx^{-1}+h^{-1}x-xh^{-1}-x^{-1}h-hx-h^{-1}x^{-1}\ne 0$, since $xh\not \in \{xh^{-1},x^{-1}h,hx,h^{-1}x^{-1}\}$.
This proves that $\breve{H}$ is not commutative. As by assumption $RG^+$ is Lie metabelian, so is $RH^+$ and hence $\z(H)=\GEN{g\in H : g^2=1}$, by Lemma~\ref{Exp4CentroA}. In particular, $y\not\in \z(H)$ and therefore $(y,h)\ne 1$. Moreover $(xy)^2=x^2y^2\ne 1$ and therefore $xy\not\in \z(H)$. Thus $h^x\ne h^y$. For future use we display the information gathered in this paragraph:
	\begin{equation}\label{Babelianindex21}
	 (xh)^2\ne 1, (yh)^2 \ne 1, (x^{\pm 1} y^{\pm 1}h)^2 \ne 1, (x,h)\ne 1, (y,h) \ne 1, h^x\ne h^y.
	\end{equation}

By means of contradiction we assume that either $x^h\ne x^{-1}$ or $y^h\ne y^{-1}$.

\textbf{Claim 1}. $h^x \ne h^{-1}$ and $h^y \ne h^{-1}$.

By symmetry we only prove the second inequality. By means of contradiction assume that $h^y=h^{-1}$. Then $h^x\ne h^{-1}$, by (\ref{Babelianindex21}).
Consider
\begin{eqnarray*}
0&=&[[x+x^{-1},h+h^{-1}],[x+x^{-1},xyh+(xyh)^{-1}]] \\
&=&
8(xy+xyh^2+yx^{-1}+hxh^{-1}y^{-1}+yx^{-1}h^2+hxhy^{-1}+y^{-1}x^2hxh^{-1}+y^{-1}hx^{-1}h \\
&&-yh^{-1}xh-xy^{-1}-yhxh-yx^2h^{-1}xh-xy^{-1}h^2-y^{-1}x^{-1}-yx^2hxh-y^{-1}x^{-1}h^2)
\end{eqnarray*}
As $(x,h)\ne 1$, $y^2\ne 1$, $h^x\ne h^{-1}$, $x^2\ne y^2$ and $(xh)^2\ne 1$, we deduce that either $x^h=x^{-1}$, $h^2=y^2$ or $h^2=x^2y^2$. We consider these three cases separately.

If $x^h=x^{-1}$ then by the initial  assumption $y^h\ne y^{-1}$. Thus $h^2=(y,h)\ne y^2$ and therefore
$$\matriz{{l}
[[y+y^{-1},yh+(yh)^{-1}],[y+y^{-1},xyh+(xyh)^{-1}]] = \\
\hspace{1cm} 16(x+  x y^2+ x^{-1} h^2+ x^{-1} y^2 h^2
-x^{-1}-  x h^2- x^{-1} y^2- x y^2 h^2)= 0,
}$$
and thus $x\in\{x^{-1}, x h^2, x^{-1} y^2, x y^2 h^2\}$, yielding to a contradiction in all cases.

If $y^2=h^2$ then
\begin{eqnarray*}
0&=&[[x+x^{-1},xh+(xh)^{-1}],[x+x^{-1},xyh+(xyh)^{-1}]] \\
&=&16(y+x^2y+x^{-1}h^{-1}xyh+hx^{-1}h^{-1}y^{-1}x^{-1}-y^{-1}x^{-1}hxh-y^3-y^{-1}xhxh-y^{-1}x^2)
\end{eqnarray*}
and thus $y\in\{y^{-1}x^{-1}hxh, y^3, y^{-1}xhxh, y^{-1}x^2\}$.
As $x^2\ne y^2\ne 1$ and $(x,h)\ne 1$, we deduce that $y$ can only be canceled with $y^{-1}(xh)^2$.
Thus $h^2=y^2=(xh)^2$ or equivalently $x^h=x^{-1}$, a case which has been excluded in the previous paragraph.

Finally, assume that $h^2=x^2y^2$. Then
\begin{eqnarray*}
0&=&[[x+x^{-1},xh+(xh)^{-1}],[x+x^{-1},xyh+(xyh)^{-1}]] \\
&=&16(y+y^{-1}h^2+yx^{-1}hxh+yhxhx
-y^{-1}xhxh-yh^2-y^{-1}xhx^{-1}h-y^{-1})
\end{eqnarray*}
Thus $y\in\{y^{-1}xhxh, yh^2, y^{-1}xhx^{-1}h, y^{-1}\}$ and therefore either $y^2=(xh)^2$ or $y^2=xhx^{-1}h$. In the first case $(x,h)=x^2h^2(xh)^2=y^2(xh)^2=1$, contradicting (\ref{Babelianindex21}).
In the second case $x^h = h^{-1} y^2 h^{-1} x = x^{-1}$, a case excluded above. This finishes the proof of Claim 1.

\textbf{Claim 2}. $h^2\not \in \{x^2,y^2,x^2y^2\}$.

Observe that $x,y$ and $h_1=xh$ satisfy the assumptions of the lemma and therefore $h^{-1}x^{-1}=h_1^{-1}\ne h_1^x = hx$, by Claim 1. Hence $h^2\ne x^2$. Similarly $h^2\ne y^2$ and applying this to $x,xy$ and $h$ we deduce that $h^2\ne(xy)^2= x^2y^2$. This proves Claim 2.

\textbf{Claim 3}. $x^h\ne x^{-1}$ and $y^h\ne y^{-1}$.

By symmetry it is enough to prove one of the two conditions and by means of contradiction assume that $x^h=x^{-1}$. Then $y^h\ne y^{-1}$ by the initial  assumption.
Therefore

$$\matriz{{l}
\hspace{-1cm}[[y+y^{-1},h+h^{-1}],[y+y^{-1},xh+(xh)^{-1}]] = \\
=8(
x
+xy^2
+xh^2
+x^{-1}(yh)^2y^2h^2
+xy^2h^2
+x^{-1}(yh)^2h^2
+x^{-1}(yh)^2y^2
+x^{-1}(yh)^2\\
-x^{-1}
-x(yh)^2y^2h^2
-x^{-1}y^2
-x^{-1}h^2
-xyhyh^{-1}
-xy^{-1}hyh

-x^{-1}y^2h^2
-x(yh)^2) \ne 0
}$$
by (\ref{Babelianindex21}) and Claims 1 and 2, a contradiction.

\textbf{Claim 4}. $y^h\ne x^2y$ and $x^h\ne y^2x$.

Again, by symmetry it is enough to prove that the first inequality holds and by means of contradiction we assume that $y^h=x^2y$. Then
$$\matriz{{l}
\hspace{-1cm}[[x+x^{-1},h+h^{-1}],[x+x^{-1},xyh+(xyh)^{-1}]] =\\
8(xy+x^{-1}y+xyh^2+y^{-1}hxh^{-1}+x^{-1}yh^2+y^{-1}hx^{-1}h^{-1}+y^{-1}hx^{-1}h +y^{-1}hxh \\
-yh^{-1}xh-xy^{-1}-x^2yh^{-1}xh-yhxh-y^{-1}x^{-1}-xy^{-1}h^2-xy(xh)^2-y^{-1}x^{-1}h^2) \ne 0
}$$
by (\ref{Babelianindex21}) and Claims 1, 2 and 3, a contradiction.

Finally we consider
\begin{eqnarray*}
0&=&[[x+x^{-1},h+h^{-1}],[x+x^{-1},yh+(yh)^{-1}]] \\
&=&4(y
+x^2y
+yh^2
+hxyh^{-1}x^{-1}
+hxh^{-1}y^{-1}x^{-1}
+hy^{-1}h^{-1}
+yx^2h^2
+xhxyh^{-1}
+xhx^{-1}yh\\
&&
+xhxh^{-1}y^{-1}
+xhx^{-1}hy^{-1}
+hy^{-1}h^{-1}x^2
+xhy^{-1}hx^{-1}
+xhxyh
+hxhy^{-1}x
+hy^{-1}hx^2\\
&&
-yxhx^{-1}h^{-1}
-hyh^{-1}
-y^{-1}
-yhxh^{-1}x
-xyhx^{-1}h
-hyh^{-1}x^2
-hyh
-y^{-1}x^{2}
-y^{-1}h^{2}\\
&&
-hy^{-1}xh^{-1}x^{-1}
-yhxhx
-hyhx^2
-y^{-1}x^{2}h^{2}
-xhy^{-1}xh^{-1}
-hy^{-1}xhx^{-1}
-hy^{-1}xhx).
\end{eqnarray*}
Taking into account the inequalities in (\ref{Babelianindex21}) and Claims 1-4, in order to cancel $y$ we deduce that
$y\in \{hy^{-1}xh^{-1}x^{-1},hyhx^2,xhy^{-1}xh^{-1},hy^{-1}xhx^{-1}\}$.
However, applying Claim 4 to $y,y^{-1}x$ and $h$ we deduce that $(y^{-1}x)^h \ne yx$; applying Claim 2 to $x,y$ and $yh$ we deduce that $(yh)^2\ne x^2y^2$; applying Claim 3 to $x,y^{-1}x$ and $h$ we have $(y^{-1}x)^h\ne x^{-1}y$; and applying Claim 1 to $x,y$ and $yh$ we deduce that $(yh)^{x}\ne h\inv y\inv$. This yields to a contradiction and finishes the proof of the lemma.
\end{proof}

We now introduce a third subgroup of $G$:
	$$C=\GEN{xy : x^2\ne 1 \ne y^2, (x,y)=1}$$

\begin{lem}\label{Expo4Final}
If $\Exp(G)=4$ then
\begin{enumerate}
 \item $\z(G)\subseteq C$,
 \item $C$ is abelian,
 \item $c^t=c^{-1}$ for every $c\in C$ and every $t\in G\setminus C$, and
 \item either $[G:C]=2$ or $C=\z(G)$ and $[G:C]=4$.
\end{enumerate}

\end{lem}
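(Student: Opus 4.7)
The plan is to obtain statements (1)--(4) from a single observation about how elements of $G$ act by conjugation on the generators of $C$, and then to carry out a short case analysis using Lemma~\ref{Babelianindex2}.

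For (1), the non-commutativity of $\breve G$ together with $\Exp(G)=4$ forces $G$ to be non-abelian, so there exists $g\in G$ of order~$4$. Given $z\in\z(G)=A$ (Lemma~\ref{Exp4CentroA}), the decomposition $z=g\cdot(g^{-1}z)$ writes $z$ as a product of two commuting elements of order~$4$ (the second has order~$4$ since $(g^{-1}z)^2=g^{-2}\neq 1$), so $z\in C$.

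The key observation underlying (2) and (3) is that for every generator $c=xy\in D$ and every $t\in G$, $c^t\in\{c,c^{-1}\}$. If $x^2=y^2$, then $c^2=1$, so $c\in A\subseteq\z(G)$ and $c^t=c$; otherwise Lemma~\ref{Babelianindex2} applied to the good pair $(x,y)$ with $t$ forces either $\GEN{x,y,t}$ abelian (giving $c^t=c$) or the simultaneous inversions $x^t=x^{-1}$, $y^t=y^{-1}$ (giving $c^t=c^{-1}$). The same analysis, with $t$ playing the role of a centralizing or inverting element, shows that for each $c\in D\setminus A$ one has $C_G(x)=C_G(y)=C_G(c)$; I denote this common subgroup by $N_c$, which is normal of index $2$ in $G$.

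The heart of the proof is to show that for any two $c=xy,c'=x'y'\in D\setminus A$, both $cc'=c'c$ and $N_c=N_{c'}$ hold. For the commutativity: if not, then $c'\notin N_c$ by the key observation, and since $N_c$ has index $2$ exactly one of $x',y'$ lies outside $N_c$; say $x'\in N_c$ and $y'\notin N_c$. Then $(x,y')\neq 1$, so $\GEN{x',y',x}$ is non-abelian, and Lemma~\ref{Babelianindex2} applied to the good pair $(x',y')$ with $h=x$ yields $x'^{\,x}=x'^{-1}$; but $x'\in N_c=C_G(x)$ gives $x'^{\,x}=x'$, forcing $x'^2=1$ and contradicting $x'\notin A$. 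For the uniformity: commutativity places $c'\in N_c$, so by index $2$ either both $x',y'$ lie in $N_c$ (case~(a)) or both lie outside (case~(b)). In case~(a) the squares $x^2,x'^2,y'^2$ cannot all coincide since $x'^2\neq y'^2$; without loss of generality $x^2\neq x'^2$, whence $(x,x')$ is a good pair and $N_c=C_G(x)=C_G(x')=N_{c'}$. In case~(b), applying Lemma~\ref{Babelianindex2} to the pair $(x,y)$ with $h=x'$ and then with $h=y'$, and symmetrically to the pair $(x',y')$ with $h=x$ and $h=y$, yields mutual inversions that translate into the commutator identities $(x,x')=x^{-2}=x'^{-2}$ and $(x,y')=x^{-2}=y'^{-2}$, forcing $x'^2=x^2=y'^2$ and contradicting $c'\notin A$. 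Consequently $C$ is abelian, $D\subseteq N:=N_c$, and $C\subseteq N$. Statement~(3) now follows: conjugation by $t\in G$ is a homomorphism on the abelian group $C$ which, by the key observation, fixes every generator if $t\in N$ and inverts every non-central generator (while fixing the self-inverse central ones) if $t\notin N$; hence $t$ centralizes or inverts $C$ according as $t\in N$ or $t\notin N$.

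Finally, for (4), either $C=A$ or $C\supsetneq A$. If $C=A$, then $[G:A]=4$ is forced: the cases $[G:A]\in\{1,2\}$ are excluded by non-abelianness of $G$ and by Theorem~\ref{CasiAntisimetricos} respectively (the latter because $A$ elementary abelian of index $2$ would make $\breve G$ commutative), while $[G:A]\geq 8$ would, via a decomposition argument in $\bar G=G/A$ equipped with the commutator bilinear form $\omega\colon\bar G\times\bar G\to A$ and the square quadratic form $\sigma\colon\bar G\to A$, produce $\bar u,\bar v\in\bar G\setminus\{0\}$ with $\omega(u,v)=0$ and $\sigma(\bar u)\neq\sigma(\bar v)$, i.e., a good pair producing an element of $D\setminus A$, contradicting $C=A$. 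If instead $C\supsetneq A$, then $C\subseteq N$ and $[G:N]=2$ give $[G:C]=2$ once we verify $N\subseteq C$: for each $n\in N\setminus A$ we must exhibit $u\in N\setminus A$ with $u^2\neq n^2$, whence $(u,u^{-1}n)$ is a good pair and $n\in D\subseteq C$; such $u$ exists, since otherwise all non-central elements of $N$ would share a single square and a repeat of the counting argument above would place us back in the $C=A$ case. The main obstacle is the case analysis in the third paragraph, where Lemma~\ref{Babelianindex2} must be applied with carefully chosen auxiliary elements to extract the necessary contradictions.
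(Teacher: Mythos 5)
Your treatment of parts (1)--(3) and of the case $C\neq\z(G)$ in part (4) is sound and is essentially a reorganization of the paper's argument around the centralizer $N_c=C_G(c)$; like the paper, everything there reduces to Lemma~\ref{Babelianindex2} and Lemma~\ref{Exp4CentroA}. (Two small points: your argument in (3) inverts $C$ only for $t\notin N$, so the statement for $t\in G\setminus C$ needs $N\subseteq C$, which you establish only inside part (4) and only when $C\supsetneq A$; and the auxiliary element $u$ in your part (4) can simply be taken to be $x$ or $y$, since $n\in N=C_G(x)=C_G(y)$ and $x^2\neq y^2$.)

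The genuine gap is the exclusion of $[G:\z(G)]\geq 8$ when $C=\z(G)$. You propose to produce a good pair purely from the structure of the commutator form $\omega\colon\bar G\times\bar G\to A$ and the square map $\sigma$. This cannot work: $\omega$ takes values in an elementary abelian $2$-group of unbounded rank, so it can be anisotropic in the sense that $\omega(\bar u,\bar v)\neq 0$ for all linearly independent $\bar u,\bar v$. Concretely, let $G$ be the class-$2$ group generated by $x_1,x_2,x_3$ in which $x_i^2$ and $(x_i,x_j)$ are central of order $2$ and the six elements $x_1^2,x_2^2,x_3^2,(x_1,x_2),(x_1,x_3),(x_2,x_3)$ are independent. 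Then $\Exp(G)=4$, $\z(G)=\{g:g^2=1\}$ has index $8$, and no two elements that are independent modulo $\z(G)$ commute; hence every product $xy$ with $x^2\neq 1\neq y^2$ and $(x,y)=1$ lies in $\z(G)$, so $C=\z(G)$ has index $8$ and there is no good pair to be found. This group satisfies every structural conclusion of the preceding lemmas, so the only way to rule it out is to invoke the Lie metabelian identity itself --- which is exactly what the paper's Claims 1--6 and the final displayed computation of $[[x+x^{-1},y+y^{-1}],[x+x^{-1},u+u^{-1}]]\neq 0$ do. That computation is the bulk of the paper's proof of this lemma, and your proposal replaces it with an assertion that is false as stated.
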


\begin{proof}
(1) As $\Exp(G)=4$ there is an element $x\in G$ of order 4. If $y\in \z(G)$ then $y^2=1$, by Lemma~\ref{Exp4CentroA} and therefore $(xy)^2\ne 1$ and $(x,y)=1$. Therefore $y=x(xy)\inv \in C$.

(2) By means of contradiction we assume that $C$ is not abelian. Then let $x,y,t,u\in G$ such that $1\not\in \{x^2,y^2,t^2,u^2\}$, $(x,y)=(t,u)=1$ and $(xy,tu)\ne 1$. In particular $xy,tu\not\in \z(G)$ and hence $(xy)^2\ne 1 \ne (tu)^2$. Thus $x^2\ne y^2$ and $t^2\ne u^2$. Moreover either $\GEN{x,y,t}$ or $\GEN{x,y,u}$ is not abelian. If both are non-abelian then $x^t=x^{-1}=x^u$, $y^t=y^u=y\inv$, by Lemma~\ref{Babelianindex2}. Then $(x,tu)=(y,tu)=1$, contradicting $(xy,tu)\ne 1$. Thus, by symmetry one may assume that $\GEN{x,y,t}$ is non-abelian and $\GEN{x,y,u}$ is abelian. Then $x^t=x^{-1}$ and $y^t=y^{-1}$. Applying Lemma~\ref{Babelianindex2} to $\GEN{t,u,x}$ we deduce that $u=u^x=u^{-1}$, a contradiction.

(3) Let $t\in G\setminus C$. It is enough to show that if $x^2\ne 1 \ne y^2$ and $(x,y)=1$ then $(xy)^t=(xy)\inv$. If $x^2=y^2$ then $xy\in \z(G)$ and hence $(xy)^t=xy=(xy)\inv$. Otherwise $x^t=x^{-1}$ and $y^t=y^{-1}$, by Lemma~\ref{Babelianindex2}. Therefore $(xy)^t=x^{-1}y^{-1}=(xy)^{-1}$, as desired.

(4) Suppose that $C\ne \z(G)$ and let $c\in C\setminus \z(G)$. If $x,y\in G\setminus C$ with $xy\not\in C$ then by Lemma~\ref{Exp4CentroA} and (3) it follows that $c\ne c^{-1}=c^{xy}=c$. Thus, in this case $[G:C]=2$.

Finally suppose that $C=\z(G)$. This implies that if $[\GEN{x,y,\z(G)}:\z(G)]>2$ then $(x,y)\ne 1$ because otherwise $xy\in C=\z(G)$.
By Lemma~\ref{Exp4CentroA}, $G/\z(G)$ is elementary abelian of order $\ge 4$. We have to prove that the order is exactly 4. Otherwise there are $x,y,u\in G$ such that $[\GEN{x,y,u,\z(G)}:\z(G)]=8$.
These $x,y$ and $u$ will be fixed for the rest of the proof.
Then $1\not\in \{(x,y),(x,u),(y,u),(x,yu)=(x,y)(x,u),(xy,u)=(x,u)(y,u),(xu,y)=(x,y)(u,y),(xy,xu)=(x,u)(y,x)(y,u)\}$ and therefore $\GEN{(x,y),(x,u),(y,u)}$ has order 8.

\noindent $\bullet$ \textbf{Claim 1}. If $[\GEN{g,h,\z(G)}:\z(G)]=4$ then $(g,h)\ne g^2h^2 $.

For otherwise $(gh)^2=g^2h^2(g,h)=1$ and hence $gh\in \z(G)$, a contradiction.

\noindent $\bullet$ \textbf{Claim 2}. If $[\GEN{g,h,\z(G)}:\z(G)]=4$ then $(g,h)\ne g^2$.

By symmetry it is enough to prove the claim for $g=y$ and $h=u$. So assume that $(y,u)=y^2$, or equivalently $y^u=y^{-1}$.

Then $1\not\in \{(xu,y)=(x,y)y^2,(xy,u)=(x,u)y^2,(xy,yu)=(x,yu)y^2,(xyu)^2=xuy^{-1}xyu=xux(x,y)u=(x,yu)x^2u^2\}$ and thus
	\begin{equation}\label{y2NoConmutador}
	 y^2\not\in \{(x,y),(x,u),(x,yu)\} \quad \text{and} \quad (x,yu)\ne x^2u^2
	\end{equation}

Before proving Claim 2 we prove some intermediate claims.

\textbf{Claim 2.1}. $(x,y)\ne x^2$.

By means of contradiction assume $x^2=(x,y)$. Then by (\ref{y2NoConmutador}) $x^2\ne y^2$, $1\ne (x,yu)=x^2(x,u)$ and $1\ne (xyu)^2=u^2(x,u)$. Having these relations in mind we get that
\begin{eqnarray*}
0&=&[[x+x^{-1},u+u^{-1}],[x+x^{-1},yu+(yu)^{-1}]] \\
&=&
8y(1+x^2+u^2+x^2u^2+y^2(x,u)+x^2y^2(x,u)+y^2u^2(x,u)+x^2y^2u^2(x,u) \\
&& -(x,u)-y^2-x^2(x,u)-x^2y^2-u^2(x,u)-y^2u^2-x^2u^2(x,u)-x^2y^2u^2),
\end{eqnarray*}
and therefore, we obtain that $u^2\in \{y^2,x^2y^2\}$. However, if $u^2=y^2$ then
$$\matriz{{l}
[[x+x^{-1}, xu+u^{-1}x^{-1}], [x+x^{-1}, xyu+(xyu)^{-1}]] \\
\hspace{1cm}=16y(1+x^2+y^2(x,u)+x^2y^2(x,u)-(x,u)-y^2-x^2(x,u)-x^2y^2)\ne 0
}$$
because $1\not\in \{(x,u), y^2, x^2(x,u), x^2y^2\}$, a contradiction.
If $y^2=x^2u^2$ then
$$\matriz{{l}
[[x+x^{-1}, xu+u^{-1}x^{-1}], [x+x^{-1}, xyu+(xyu)^{-1}]] \\
\hspace{1cm}=16y(1+x^2+(x,u)y^2+u^2(x,u)-(x,u)-y^2-x^2(x,u)-u^2)\ne 0}$$
because $1\not\in\{(x,u),y^2, x^2(x,u), u^2\}$, a contradiction.

\textbf{Claim 2.2}. $(x,u)\ne x^2$.

By means of contradiction assume that $(x,u)= x^2$. Then $1\ne (xy,u)=x^2y^2$, $1\ne (x,yu)=(x,y)x^2$, $1\ne (y,xu)=(x,y)y^2$. Having in mind these relations we have that
\begin{eqnarray*}
0&=&[[x+x^{-1},y+y^{-1}],[x+x^{-1},xyu+(xyu)^{-1}]] \\
&=&
8xu(1+x^2+y^2+(x,y)u^2+x^2y^2+x^2u^2(x,y)+y^2u^2(x,y)+x^2y^2u^2(x,y) \\
&&-(x,y)-u^2-x^2(x,y)-y^2(x,y)-x^2u^2-y^2u^2-x^2y^2(x,y)-x^2y^2u^2)
\end{eqnarray*}
Therefore $u^2\in \{x^2,y^2,x^2y^2\}$. If $u^2=y^2$, then taking $x_1=x$, $y_1=u$ and $u_1=y$, they satisfy $(x_1,y_1)=x_1^2$ and $(y_1,u_1)=y_1^2$, which contradicts Claim 2.1. By symmetry we may also exclude the case $u^2=x^2$. Therefore $u^2=x^2y^2=(xy,u)$. Taking now $x_1=x$, $y_1=u$ and $u_1=xy$, we have $(y_1,u_1)=(u,xy)=x^2y^2=u^2=y_1^2$ and $(x_1,y_1)=(x,u)=x^2$, again in contradiction with Claim 2.1, that finishes the proof of the claim.

\textbf{Claim 2.3}. $(x,yu)\not\in \{u^2,x^2y^2u^2,y^2u^2\}$ and $x^2\ne u^2$.

By means of contradiction assume first that $(x,yu)=u^2$. Let $x_1=y$, $y_1=yu$ and $u_1=x$. Then $(y_1,u_1)=(yu,x)=u^2=u^2y^2(u,y)=(yu)^2=y_1^2$ and $(x_1,y_1)=y^2=x_1^2$, in contradiction with Claim 2.1.

Secondly assume that $(x,yu)=x^2y^2u^2$. Then $(x,y)(x,u)=(x,yu)=x^2y^2u^2$ and taking $x_1=xu$, $y$ and $u$ we have that $x^2u^2(x,u)=x_1^2= (x_1,y)=(x,y)y^2$, contradiction Claim 2.1.

 Thirdly, assume $(x,yu)=u^2y^2$ and consider $x_1=y$, $y_1=yu$ and $u_1=xy$. Then $(y_1,u_1)=(x,y)(x,u)y^2=(x,yu)y^2=u^2=y_1^2$ and $(x_1,y_1)=(y,u)=y^2=x_1^2$, contradicting Claim 2.1.

Finally assume that $x^2=u^2$ and consider $x_1=y$, $y_1=xu$ and $u_1=u$. Then $(y_1,u_1)=(x,u)=y_1^2$ and $(x_1,u_1)=x_1^2$, contradicting Claim 2.2.
This finishes the proof of Claim 2.3.

\textbf{Claim 2.4}. $y^2\ne u^2$.

By means of contradiction suppose that $y^2=u^2=(y,u)$. Then we have that
$$\matriz{{l}
[[x+x^{-1},xy+(xy)^{-1}],[x+x^{-1},xu+(xu)^{-1}]] \\
\hspace{1cm}=
8yu(1+x^2+(x,yu)+(x,y)y^2+(x,yu)x^2+(x,y)x^2u^2+(x,u)x^2u^2+(x,u)y^2 \\
\hspace{1.3cm}-(x,y)-(x,u)-y^2-(x,y)x^2-(x,u)x^2-x^2u^2-(x,yu)y^2-(x,yu)x^2u^2)  =0}$$
and hence $1\in \{(x,y), (x,u), y^2, (x,y)x^2, (x,u)x^2, x^2u^2, (x,yu)y^2, (x,yu)x^2u^2\}$ which contradicts (\ref{y2NoConmutador}) and Claims 2.1, 2.2 and 2.3.

\textbf{Claim 2.5}. $u^2\ne x^2y^2$.

Otherwise assume that $(y,u)=y^2=u^2x^2$. Hence
$$\matriz{{l}
[[x+x^{-1},xy+(xy)^{-1}],[x+x^{-1},xu+(xu)^{-1}]] \\
\hspace{1cm} 8yu(1+x^2+(x,yu)+(x,y)y^2+(x,u)y^2+(x,yu)x^2+(x,y)u^2+(x,u)u^2 \\
\hspace{1.3cm} -(x,y)-(x,u)-y^2-(x,y)x^2 -(x,u)x^2-u^2-(x,yu)x^2u^2-(x,yu)u^2)=0.}$$
Then $1\in \{(x,y), (x,u), y^2, (x,y)x^2, (x,u)x^2, u^2, (x,yu)x^2u^2, (x,yu)u^2\}$
in contradiction with (\ref{y2NoConmutador}) and Claims 2.1, 2.2 and 2.3.

\textbf{Claim 2.6}. $(x,u)\ne x^2y^2$.

Let $x_1=xu$. If $(x,u)=x^2y^2$ then $x_1^2=(x,u)x^2u^2=y^2u^2$, contradicting Claim 2.5. This finishes the proof of Claim 2.6.

We are ready to prove Claim 2. Consider the following double commutator
$$\matriz{{l}
[[x+x^{-1},y+y^{-1}],[x+x^{-1},xu+(xu)^{-1}]] \\
4xyu(
1
+y^2
+x^2
+(x,yu)
+(x,y)u^2
+(u,x)u^2
+(x,yu)x^2y^2
+(x,yu)y^2
+(x,yu)x^2\\
+(x,y)u^2y^2
+(x,y)u^2x^2
+ (x,u)y^2 u^2
+ (x,u)x^2 u^2
+x^2 y^2
+(x,y)x^2 y^2 u^2
+ (x,u)x^2y^2u^2 \\

-(x,y)
-(x,u)
-u^2
-(x,y)y^2
-(x,y)x^2
- (x,u)y^2
-(x,u)x^2
-u^2y^2
-x^2u^2\\
-(x,yu)u^2
-(x,y)x^2 y^2
- (x,u)x^2y^2
-x^2y^2u^2
- (x,yu)y^2 u^2
- (x,yu)x^2u^2
-(x,yu)x^2 y^2 u^2 )
= 0.
}$$
Then 1 is in the support of the negative part which contradicts (\ref{y2NoConmutador}) and Claims 2.1-2.6. This finishes the proof of Claim 2.

\noindent $\bullet$ \textbf{Claim 3}. If $[\GEN{g,h,\z(G)}:\z(G)]=4$ then $g^2\ne h^2 \ne (gh)^2$.

By Claim 2, $(gh)^2=g^2h^2(g,h)\ne h^2$. Then applying this relation to $g_1=gh$ and $h_1=h$ we have $g^2=(g_1h_1)^2\ne h_1^2 = h^2$.

\noindent $\bullet$ \textbf{Claim 4}. If $[\GEN{g,h,k,\z(G)}:\z(G)]=8$ then $(h,k)\ne g^2h^2$.

By symmetry one may assume that $g=x$, $h=y$ and $k=u$ and by means of contradiction we assume that $(y,u)=x^2y^2$.
Then, since $(x,u)(y,u)=(xy,u)\neq 1$, it follows that $(x,u)\ne (y,u)=x^2y^2$ and $(x,yu)\neq (y,u)=x^2y^2$.
Moreover, by Claim 2, $u^2\ne (y,u)=x^2y^2$.
We collect this information for future use:
\begin{equation}\label{Claim4Inicial}
 x^2y^2 \not\in \{u^2, (x,u), (x,yu)\} 
\end{equation}

\textbf{Claim 4.1}. $(x,u)\ne y^2u^2$.

If $(x,u)=y^2u^2$ then
$$\matriz{{l}
[[x+x^{-1},y+y^{-1}],[x+x^{-1},xu+(xu)^{-1}]] \\
\hspace{1cm} 8xyu(1+x^2y^2+x^2+(x,y)y^2u^2+y^2+(x,y)x^2u^2+(x,y)x^2y^2u^2+(x,y)u^2\\
\hspace{1.3cm} -(x,y)-y^2u^2-(x,y)x^2y^2-(x,y)x^2-x^2u^2-x^2y^2u^2-(x,y)y^2-u^2)= 0
}$$
and thus $1\in\{(x,y), y^2u^2, (x,y)x^2y^2, (x,y)x^2, x^2u^2, x^2y^2u^2, (x,y)y^2, u^2\}$ which contradicts
 Claims 1-3. This finishes the proof of Claim 4.1.

\textbf{Claim 4.2}. $(x,yu)\ne y^2$.


Assume that $(x,yu)=y^2$. Since $y^2=(x,yu)=(x,y)(x,u)$ it follows that $(x,u)\neq y^2$.  Combining this with Claims 2 and 3 and (\ref{Claim4Inicial}) we have
$$\matriz{{l}
[[x+x^{-1},y+y^{-1}],[x+x^{-1},xu+(xu)^{-1}]] \\
\hspace{1cm} 8xyu(1+x^2y^2+y^2+(x,y)u^2+(x,y)y^2u^2+x^2+(x,u)x^2u^2+(x,y)x^2u^2 \\
\hspace{1.3cm} -(x,u)y^2-(x,u)-u^2-(x,u)x^2-(x,u)x^2y^2-x^2y^2u^2-y^2u^2-x^2u^2) \ne 0,
}$$
a contradiction. This finishes the proof of Claim 4.2.

We are ready to prove Claim 4.
Applying Claims 1-3, 4.1 and 4.2 and (\ref{Claim4Inicial}) we deduce that
$$\matriz{{l}
[[x+x^{-1},y+y^{-1}],[x+x^{-1},xyu+(xyu)^{-1}]] \\
\hspace{0.3cm} = 4xu(1+(x,u)+x^2y^2+x^2+(xu,y)x^2y^2u^2+(xy,u)+x^2(x,u)+y^2+(x,yu)u^2\\
\hspace{0.5cm} +(xu,y)u^2+(x,y)x^2u^2+y^2(x,u)+(x,yu)x^2y^2u^2+(x,yu)x^2u^2+(x,y)y^2u^2+(x,yu)y^2u^2\\
\hspace{0.5cm}-(x,y)-u^2-(x,yu)-(x,y)x^2y^2-(x,y)x^2-(x,u)u^2-(y,u)u^2-(y,u)y^2u^2-(x,yu)x^2y^2\\
\hspace{0.5cm}-(x,yu)x^2-(x,y)y^2-(xy,u)u^2-(x,u)x^2u^2-y^2u^2-(x,yu)y^2-(x,u)y^2u^2) \ne 0,
}$$






a contradiction that  finishes the proof of Claim 4.

\noindent $\bullet$ \textbf{Claim 5}. If $[\GEN{g,h,k,\z(G)}:\z(G)]=8$ then $(g,h)\ne k^2$.

Let $g_1=kg^{-1}$, $h_1=g$ and $k_1=gk^{-1}h=g_1^{-1}h$. Then $[\GEN{g_1,h_1,k_1,\z(G)}:\z(G)]=8$.
Thus $(g,h)=(h_1,g_1k_1)=(h_1,g_1)(h_1,k_1)=k^2g_1^2h_1^2(h_1,k_1)\ne k^2$, by Claim 4. This proves Claim 5.

\noindent $\bullet$ \textbf{Claim 6}  If $[\GEN{g,h,k,\z(G)}:\z(G)]=8$ then
$(ghk)^2 \ne \{ g^2h^2, g^2h^2k^2\}$.
In fact  $(ghk)^2g^2h^2k^2=(g,h)(g,k)(h,k)\ne 1$, because $[\GEN{g,h,k,\z(G)}:\z(G)]=8$ ,  and thus $(ghk)^2 \ne g^2h^2k^2$.
Now assume that $(ghk)^2 = g^2h^2$. Then $(g,hk)=h^2(hk)^2$ which contradicts Claim 4.

%
%

Finally, using Claims 1-6 we deduce that
$$\matriz{{l}
\hspace{-0.3cm}[[x+x^{-1},y+y^{-1}],[x+x^{-1},u+u^{-1}]] = \\
2yu((x,y)+(x,u)+(y,u)+(x,y)(x,u)(y,u)+(xyu)^2+\\
(x,y)x^2+(x,y)y^2+(x,y)u^2+(x,u)x^2+(x,u)y^2+(x,u)u^2+(y,u)x^2+(y,u)y^2+(y,u)u^2+\\
(x,y)x^2y^2+(x,y)x^2u^2+(x,y)y^2u^2+(x,u)x^2y^2+\\
(x,u)x^2u^2+(x,u)y^2u^2+(y,u)x^2y^2+(y,u)x^2u^2+(y,u)y^2u^2+\\
(xyu)^2x^2y^2+(xyu)^2x^2u^2+(xyu)^2y^2u^2+\\
(xy)^2u^2+(xu)^2y^2+(yu)^2x^2+\\
(xyu)^2x^2+(xyu)^2y^2+(xyu)^2u^2\\
-1-x^2-y^2-u^2-(x,y)(x,u)-(xu,y)-(xy,u)-x^2y^2-x^2u^2-y^2u^2-\\
(x,y)(x,u)x^2-(x,y)(x,u)y^2-(x,y)(x,u)u^2-(xu,y)x^2-(xu,y)y^2-(xu,y)u^2-(xy,u)x^2-\\
(xy,u)y^2-(xu,y)u^2-x^2y^2u^2-(x,y)(x,u)(y,u)x^2y^2-(x,y)(x,u)x^2u^2-(x,y)(x,u)y^2u^2-\\
(x,y)(x,u)x^2y^2-(xu,y)x^2u^2-(xu,y)u^2y^2-(xy,u)x^2y^2-\\
(xy,u)x^2u^2-(xy,u)y^2u^2-(x,y)(x,u)x^2y^2u^2-(xu,y)x^2y^2u^2-(xy,u)x^2y^2u^2) \ne 0,
}$$
yielding the final contradiction.
\end{proof}

We are ready to finish the proof of the necessary condition in Theorem~\ref{Main}. At the beginning of the section we proved that if $\breve{G}$ is commutative then $RG^+$ is Lie metabelian and $G$ satisfies either condition (1) or (2) of Theorem~\ref{Main}. Assume that $RG^+$  is Lie metabelian but $\breve{G}$ is not commutative as it has been assumed throughout this section. If the exponent of $G$ is different from 4 then, by Lemmas~\ref{x4No1Conmutan} and \ref{abelianindex2}, $B=\GEN{g:\circ(g)\ne 4}$ is an abelian subgroup of $G$ of index 2 and if $x\in G\setminus B$ then $x$ has order 4 and $b^x=b^{-1}$ for every $b\in B$.
Thus $B$ satisfies condition (3) of Theorem~\ref{Main}.
Assume that $G$ has exponent 4 and let $C=\GEN{xy : x^2\ne 1 \ne y^2, (x,y)=1}$.
By Lemmas~\ref{Exp4CentroA} and \ref{Expo4Final}, $C$ is an abelian subgroup of $G$ containing $Z(G)=\{g:g^2=1\}$ and either $C$ has index 2 in $G$ or $C=Z(G)$ and $[G:Z(G)]=4$. In the latter case $G$ satisfies condition (4) of Theorem~\ref{Main}. In the former case, if $t\in G\setminus C$ then $t$ has order 4
and $c^t=c^{-1}$. Thus $G$ satisfies condition (3) of Theorem~\ref{Main}, and the proof finishes.

\end{document}